%% file: main.tex
\documentclass[reqno]{amsart}

\usepackage{amsrefs}

\usepackage{amsmath,amsfonts,amssymb,amsthm}
\numberwithin{equation}{section}
\usepackage{mathrsfs}
\usepackage[svgnames]{xcolor}
\usepackage[colorlinks,linkcolor=blue,citecolor=blue,urlcolor=Navy,breaklinks]{hyperref}

\usepackage{comment}
\usepackage{graphicx}
\usepackage{caption,subcaption}

\newtheorem{theorem}{Theorem}[section]
\newtheorem{proposition}[theorem]{Proposition}
\newtheorem{corollary}[theorem]{Corollary}
\newtheorem{lemma}[theorem]{Lemma}
\newtheorem{conjecture}[theorem]{Conjecture}
\newtheorem{problem}[theorem]{Problem}

\theoremstyle{definition}
\newtheorem{definition}[theorem]{Definition}
\newtheorem{example}[theorem]{Example}
\newtheorem{remark}[theorem]{Remark}

\title[Transversal Difference Numbers]{Transversal Difference Numbers in Finite Abelian Quotients}

\author[M. Barcau \and V. Pa\c sol]{Mugurel Barcau \and
Vicen\c tiu Pa\c sol}
\address{Institute of Mathematics of the Romanian Academy \and CertSIGN, Bucharest}
\email{mugurel.barcau@imar.ro; vicentiu.pasol@imar.ro}
\author[G. C. \c Turca\c s]{George C. \c Turca\c s}
\address{Babe\c s-Bolyai University, Cluj-Napoca \and certSIGN, Bucharest}
\email{george.turcas@ubbcluj.ro}

\thanks{This work was supported by the project “Group schemes, root systems, and
related representations” funded by the European Union - NextGenerationEU through
Romania’s National Recovery and Resilience Plan (PNRR) call no. PNRR-III-C9-2023-
I8, Project CF159/31.07.2023, and coordinated by the Ministry of Research, Innovation
and Digitalization (MCID) of Romania. All authors are also supported by certSign Research and Innovation.}

\begin{document}

\begin{abstract}
Given \(H\leq G\) finite abelian groups,  a transversal \(T\subseteq G\) for
\(G/H\) has fixed size \(|G/H|\), but its ambient difference support
\(D(T)=T-T\) can vary with the embedding of \(H\) in \(G\).  We call
$
\delta(G,H)=\min_T |D(T)|
$
the transversal difference number of the pair \((G,H)\).  This invariant is
related to finite abelian factorisation, tiling complements, and small-sumset
questions, and is motivated by recent work regarding ambient Galois labels in CRT transforms for
cyclotomic-subfield homomorphic encryption.  We prove various results regarding this invariant, including a general lower bound
$\delta(G,H)\geq 2|G/H|-m(G,H),
$
where \(m(G,H)\) is the largest order of a subgroup of \(G\) disjoint from
\(H\).  The bound is sharp for cyclic quotients, and Kneser's theorem gives a
cross-transversal estimate leading to exact product families with one nonsplit
cyclic coordinate and arbitrary split factors.  These results isolate the first genuinely new residual obstruction, namely the
same-prime square plane
\[
G=(\mathbb Z/p^2\mathbb Z)^2,\qquad H=pG.
\]
For odd \(p\), this case is the technical core of the paper. Here transversals are
graphs of functions \(\mathbb F_p^2\to \mathbb F_p^2\), and \(D(T)\) decomposes into
carry-corrected finite-field derivative images. We conjecture that
\[
\delta(G,H)=(2p-1)^2
\]
for all odd primes \(p\), prove the unconditional lower bound \(3p^2-p-1\), and give
small-prime, probabilistic, and fixed-polynomial evidence for the conjecture.
\end{abstract}
\maketitle 

\section{Introduction}

A quotient group $G/H$ remembers the cosets of \(H\), but it forgets how a choice of
coset representatives sits inside the ambient group.  This ambient information
can vary substantially from one section to another.  Throughout, all groups are
finite abelian and written additively.  Given
\(H\leq G\),  a transversal
\(T\subseteq G\) for \(G/H\) has cardinality \(|G/H|\). However, its \emph{difference support} $D(T)= T-T$ can varry substantially in size and a natural question is how small this set $D(T)$ can be. We call
\[
        \delta(G,H)
        =
        \min_T |D(T)|
\]
the \emph{transversal difference number} of the pair \((G,H)\), where \(T\)
ranges over all transversals for \(G/H\).

This invariant depends on the embedding \(H\leq G\), not only on the abstract
quotient.  For instance,
$\delta(C_2,0)=2$ and 
        $\delta(C_4,2C_4)=3$,
although both quotients are isomorphic to \(C_2\). When trying to find $\delta(G,H)$, transversals can easily be found, but proving that their pairwise differences are as small as possible in the ambient group can be challenging.

After translating \(T\) we may assume \(0\in T\); then \(T\) is a
set-theoretic tiling complement to \(H\), so every element of \(G\) is written
uniquely as \(h+t\) with \(h\in H\) and \(t\in T\).  Thus \(\delta(G,H)\) is a
support minimisation problem for finite abelian factorisations \(G=H\oplus T\),
where \(\oplus\) denotes unique representation rather than an internal direct
product.  This places it near the classical theory of unique-representation factorisations and algebraic tilings \cites{Hajos1942,Redei1965, Stein1974,Dinitz2006FullRankTilings, SzaboSands2009FactoringGroups}. The lower-bound methods are closest to small-sumset, critical-pair, and Kneser-type stabiliser arguments \cites{Kneser1955AbelscheGruppen, Kemperman1960, Freiman1973,TaoVu2006,Grynkiewicz2013StructuralAdditiveTheory,GreenRuzsa2007}.  It
is not a difference-set or a relative difference-set problem, where multiplicity conditions rather than support minimisation are central
\cites{Singer1938DifferenceSets,Butson1963RelativeDifferenceSets,PottSchmidtZhou2014}, nor a
difference-basis problem, where the set itself may vary in size while its
differences cover the ambient group
\cite{BanakhGavrylkiv2019DifferenceBases}.  Here \(T\) has fixed size and must
be a transversal; only the number of distinct ambient differences is minimised.

Our motivation comes from the Galois-theoretic bookkeeping that appears
in Peikert and Pepin's \emph{Vive Galois!} programme
\cite{PeikertPepinViveGalois1}.  In exact fully homomorphic encryption
one often wants many plaintext ``slots'' over a prescribed finite field
or finite ring, so that one ciphertext carries a vector of plaintexts
and homomorphic operations act componentwise on that vector.  This is the
SIMD idea of Smart and Vercauteren \cite{SmartVercauteren2014SIMD}.
Cyclotomic rings provide fast arithmetic, many automorphisms, and good
geometric properties, but their slot structure is arithmetically rigid:
the residue degree of the chosen plaintext prime determines the slot
type, and an unwanted extension degree gives fewer slots of a larger
field than the application naturally wants.  The point of the
Peikert--Pepin construction is that cyclotomic subfields, and more
generally abelian number rings, give a more flexible Galois-theoretic
way to obtain the desired slot type and number of slots, while still
supporting CRT bases, structured transforms, and packed bootstrapping.

A finite choice of representatives enters this construction quite
explicitly.  In a tower of abelian extensions \(M/L/K\), write
\[
        G_{M/K}=\operatorname{Gal}(M/K),
        \qquad
        G_{M/L}=\operatorname{Gal}(M/L).
\]
Peikert and Pepin choose a transversal
\(T\subseteq G_{M/K}\) for the quotient \(G_{M/K}/G_{M/L}\) in order to
index a CRT basis and the corresponding top-down sparse CRT transform
\cite{PeikertPepinViveGalois1}*{Def.~5.8}.
Their Lemma~5.13 \cite{PeikertPepinViveGalois1}*{Lemma~5.13} shows that, in the
relevant automorphism expansion, a coefficient can be nonzero only for an
ambient automorphism of the form
\[
        \tau=t't^{-1}
        \qquad (t,t'\in T).
\]
Thus the possible ambient automorphism labels are contained in \(TT^{-1}\).  In
the additive notation of this paper, the same support is \(T-T\).  Consequently,
the purely finite-abelian problem of
minimising \(|T-T|\) over quotient transversals is exactly the problem of
minimising the representative-dependent ambient label support forced by
this part of the Galois/CRT bookkeeping.

This is the precise sense in which a small value of \(\delta(G,H)\) can
be useful for the FHE framework.  In Peikert--Pepin's homomorphic CRT
transforms \cite{PeikertPepinViveGalois1}*{Secs.~3.2 and~4}, structured
linear maps are evaluated by applying automorphisms and taking linear
combinations.  Each distinct ambient automorphism label that is actually
used is a potential automorphism operation, and in an FHE implementation
such an operation typically comes with key-switching data and
key-switching cost.  The first and third phases of packed bootstrapping
are precisely CRT transforms: they move noisy decryption coefficients
into the SIMD slots, allow the slotwise nonlinear decoding step, and
then move the result back.  Hence, for an implementation model in which
each distinct ambient automorphism label carries cost, a transversal
with smaller \(TT^{-1}\) may reduce the candidate
automorphism/key-switch label set and simplify the linear part of packed
bootstrapping.

A word of caution about this motivation is worth mentioning.  The quantity \(\delta(G,H)\) measures
just the size of the support of ambient automorphism labels forced
by a choice of representatives, and therefore it should not be mistaken for a statement
about runtime, memory, key-switching cost, parameter selection, or security.
Several effects sit between this invariant and any real implementation.  The
label set \(TT^{-1}\) records which automorphisms \emph{can} occur, but some of
the corresponding coefficients may vanish for reasons the transversal cannot
see; the cost of applying an automorphism depends on the particular FHE
platform; and the Ring-LWE hardness assumptions used in this line of work are
unaffected by how small \(|T-T|\) is.  Once these caveats are set aside, what
remains is a clean, finite, and entirely algebraic question, and that is the one
we pursue: how small can the difference support of a section of a finite abelian
quotient be, what is its value across broad families, and where does the naive
split-or-cyclic intuition fail?

\medskip

\noindent\textbf{Main results.}
The first part of the paper develops the theory in the finite abelian setting.
Here the trivial lower bound \(\delta(G,H)\ge |G/H|\) is attained exactly when
\(H\) has a subgroup complement.  To go further, we isolate the singleton
quotient directions of a transversal: their canonical lifts form a subgroup of
\(G\) disjoint from \(H\), and quotienting by it makes the transversal
primitive, leaving no nonzero singleton fibres.  This reduction yields the
uniform lower bound
$ \delta(G,H)\ge 2|G/H|-m(G,H)$,
where \(m(G,H)\) is the largest order of a subgroup of \(G\) disjoint from
\(H\).  For cyclic quotients the bound is sharp, and feeding Kneser's theorem (Theorem \ref{thm:kneser})
into a cross-transversal estimate pins down exact product families with one
nonsplit cyclic coordinate and arbitrary split factors.  This cyclic sharpness
sits within the prime-cyclic critical-pair tradition initiated by Vosper
\cite{Vosper1956}, and more broadly within Kemperman's analysis of small
sumsets in abelian groups \cite{Kemperman1960}.  The precise statements are
given in Sections~\ref{sec:first-bounds-cyclic}
and~\ref{sec:chains-products-kneser}, and there results therein should be viewed as both a general theory and as delimitation
results: they identify the regimes in the ideas described above already determine the invariant. The main new difficulty begins
when these mechanisms no longer control the support, namely in same-prime residual
quotients with more than one nonsplit direction.

Section \ref{sec:noncyclic-obstructions} marks this transition. A residual quotient that contains a \(C_2\times C_2\)
plane already forces a strict gap from the trivial lower-bound. 

For odd primes, however, the parity argument disappears, and the first case left
open by all the preceding exact results is
\[
G_p=(\mathbb Z/p^2\mathbb Z)^2,\qquad H_p=pG_p.
\]
This odd square-plane problem, treated from Section \ref{sec:odd-square-plane} onward, is the technical core
of the paper.
Here transversals become graphs of functions \(\mathbb F_p^2\to\mathbb F_p^2\),
and the fibres of \(D(T)\) turn into carry-corrected finite-field derivative
images.  Read in these terms, the square-plane problem sits close to
finite-field sum-product \cite{BourgainKatzTao2004} and image-expansion
\cite{MurphyPetridis2017} questions, while the fixed-polynomial evidence
gathered in Section \ref{sec:asymptotic-evidence} lies nearest to value-set
questions for polynomial maps \cite{MullenWanWang2013}.  We conjecture that \(\delta(G,H)=(2p-1)^2\) and prove the unconditional bound \( \delta(G,H) \geq 3p^2-p-1\).  We also
prove that random liftings meet the lower bound $(2p-1)^2$ with probability tending to
one, and that no fixed integer-polynomial lifting can produce counterexamples
once the prime is large enough.  The final section gathers the
primitive-quotient and square-modulus open problems that emerge along the way.

\input{section2-preliminaries.tex}
\input{section3-with-primitive-quotient-reduction.tex}
\input{section4-chains-products-kneser-updated.tex}
\input{section5-noncyclic-obstructions.tex}
\input{section6-odd-square-plane.tex}

\input{section7-asymptotic-evidence.tex}
\input{section8-open-problems-and-appendices.tex}

\section*{Code and data availability}

The computations mentioned in
Subsection~\ref{subsec:small-prime-certificates} were carried using Python and are released in the form of an online companion on GitHub at the link below
\begin{center}
\url{https://github.com/georgeturcasubb/transversals} 
\end{center}

\section*{Use of Artificial Intelligence Tools}

The authors used OpenAI ChatGPT Deep Research, accessed through ChatGPT to help locate and organize potentially relevant bibliographic material. The authors also used OpenAI Codex with GPT-5.5, accessed through the Codex agentic programming environment, to assist with Python experiments, realization of the GitHub repository and copy-editing of the manuscript. These tools were used as research aids, not as authors. In line with the transparency and human-responsibility principles of the Leiden Declaration on Artificial Intelligence and Mathematics \cite{leidendeclaration2026}, the human authors checked the sources, code, numerical results and take full responsibility for the results presented in the article.

\bibliographystyle{alpha}
\bibliography{references}

\end{document}

%% file: section2-preliminaries.tex
\section{Preliminaries and notation}
\label{sec:preliminaries}

Throughout the paper all groups are finite abelian and are written additively.
For subsets $A,B\subseteq G$ we write
$
  A+B=\{a+b:a\in A,\ b\in B\}$,
$A-B=\{a-b:a\in A,\ b\in B\}$,
and $-A=\{-a:a\in A\}$.  If $K\leq G$ is a subgroup, then $A+K$ denotes
the union of the $K$-cosets meeting $A$.

Let $H\leq G$.  The quotient map will be denoted
\[
  \pi=\pi_{G,H}:G\longrightarrow G/H.
\]
When no confusion is possible we put
\[
  Q=G/H,\qquad q=|Q|=|G/H|.
\]
A \emph{transversal} for $G/H$ is a subset $T\subseteq G$ such that
$\pi|_T:T\to G/H$ is bijective.  Equivalently, $T$ contains exactly one element
from each coset of $H$ in $G$.

For a transversal $T$ we define its \emph{difference support}
\[
  D(T)=T-T.
\]
The invariant studied in this paper is the \emph{transversal difference number}
of the pair $(G,H)$:
\begin{equation} \label{eq:deltainv}
  \delta(G,H)=\min_T |D(T)|,
\end{equation}
where $T$ ranges over all transversals for $G/H$.  Translating a transversal by
an element of $G$ does not change its difference support.  Hence, whenever
convenient, we shall assume that $T$ is \emph{normalized}, meaning that
$0\in T$.

A normalized transversal is the same thing as a set-theoretic complement to
$H$: every element $g\in G$ has a unique expression
$g=h+t$,
  with $h\in H$, and $t\in T$.
This is a unique-representation factorisation of the set $G$; it does not mean
that $T$ is a subgroup unless this is explicitly stated. We use the terminology of set factorisations and complements in the
standard sense of \cite{SzaboSands2009FactoringGroups}; see also \cites{Hajos1942,Redei1965,Stein1974} for the classical factorisation and tiling background.

It will be useful to identify a transversal with its section of the quotient map.
Thus, for a normalized transversal $T$, we write
\begin{equation} \label{eq:sec}
  s=s_T:Q\longrightarrow G
\end{equation}
for the unique section such that $s(0)=0$, $\pi\circ s=\operatorname{id}_Q$, and
$T=s(Q)$.  For a quotient direction $a\in Q$ we define the corresponding
\emph{difference fibre}
\begin{equation} \label{eq:diff-fibre}
  \mathcal D_a(T)
  =D(T)\cap \pi^{-1}(a)
  =\{s(x+a)-s(x):x\in Q\}.
\end{equation}
The fibres $\mathcal D_a(T)$ are nonempty and decompose the difference support into the following disjoint union
$
  D(T)=\bigsqcup_{a\in Q}\mathcal D_a(T).
$
Note that in particular
$
  \mathcal D_0(T)=\{0\}$.
We call $a\in Q$ a \emph{singleton direction} for $T$ if
$|\mathcal D_a(T)|=1$.  The set of singleton directions will be denoted
\begin{equation} \label{eq:singleton-dir}
  \Sigma(T)=\{a\in Q:|\mathcal D_a(T)|=1\}.
\end{equation}
If $a\in \Sigma(T)$, the unique element of $\mathcal D_a(T)$ will sometimes be
written $d_T(a)$; equivalently,
$
  s(x+a)-s(x)=d_T(a)$ for all  $x\in Q$.
Therefore, $\pi(d_T(a))=a$.

We write
\[
  \widetilde\Sigma(T)=\{d_T(a):a\in\Sigma(T)\}\subseteq G
\]
for the corresponding lifted singleton set.  Thus $\Sigma(T)$ is a subset of
the quotient $G/H$, while $\widetilde\Sigma(T)$ is a subset of the ambient
group $G$.

\begin{definition}
\label{def:primitive-transversal}
A normalized transversal $T \subseteq G$ for $G/H$ is called \emph{primitive} if
  $\Sigma(T)=\{0\}$.
Equivalently, no nonzero quotient direction has a singleton difference fibre.
\end{definition}

The following subgroup invariant will also appear in our work:
\begin{equation} \label{eq:defmGH}
  m(G,H)=\max\{|K|:K\leq G,\ K\cap H=\{0\}\}.
\end{equation}
Note that $1\leq m(G,H)\leq |G/H|$ and the case in which the pair $G$, $H$ satisfies $m(G,H)=1$ will be called
\emph{residual}.  A subgroup $K\leq G$ is a subgroup complement to $H$ if
$K\cap H=\{0\}$ and $G=H+K$.  In particular, $H$ has a subgroup complement in $G$ if and only if
$m(G,H)=|G/H|$.

For later use we recall Kneser's theorem (see \cite{Kneser1955AbelscheGruppen} or \cite{TaoVu2006}*{Theorem 5.5}) in the finite form needed below.  If
$A\subseteq G$, its stabilizer is
\[
  \operatorname{Stab}(A)=\{g\in G:A+g=A\}.
\]

\begin{theorem}[Kneser]
\label{thm:kneser}
Let $A,B$ be nonempty subsets of a finite abelian group $G$, and write
  $P=\operatorname{Stab}(A+B)$.
Then
\[
  |A+B|\geq |A+P|+|B+P|-|P|.
\]
\end{theorem}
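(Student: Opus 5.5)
The statement to be proved is Kneser's theorem (Theorem~\ref{thm:kneser}). Its conclusion concerns only the stabiliser $P=\operatorname{Stab}(A+B)$, so I would argue by induction on $|B|$ and reduce to the quotient $G/P$.

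\textbf{Reduction.} Replace $A$ and $B$ by $A+P$ and $B+P$. Since $A+B$ is a union of $P$-cosets, this does not change $A+B$. Passing to $G/P$, it then suffices to prove the following: if $\operatorname{Stab}(A+B)=\{0\}$, then $|A+B|\ge |A|+|B|-1$.

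\textbf{Induction via the Dyson $e$-transform.} The case $|B|=1$ is trivial. For $|B|\ge 2$, pick $e\in A-B$ and form
\[
A'=A\cup(B+e),\qquad B'=B\cap(A-e).
\]
These satisfy $A'+B'\subseteq A+B$ and $|A'|+|B'|=|A|+|B|$. If some $e$ gives $B'\ne B$, then $|B'|<|B|$ and we may apply the induction hypothesis to $(A',B')$. If $B+e\subseteq A$ for every $e\in A-B$, then $A+(B-B)\subseteq A$. In that case $A$ is a union of cosets of the subgroup $\langle B-B\rangle$, so $A+B$ is a union of such cosets as well. Because $|B|\ge2$, this subgroup is nontrivial, contradicting the trivial stabiliser.

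\textbf{Main obstacle.} The stabiliser $P'$ of $A'+B'$ can be nontrivial even though $\operatorname{Stab}(A+B)$ is trivial. The induction hypothesis then only gives
\[
|A+B|\ge |A'+B'|\ge |A'+P'|+|B'+P'|-|P'|.
\]
The standard fix, following Kemperman or the proof in Tao--Vu, is as follows. Choose $e$ so that $|B'|$ is minimal among transforms with $B'\ne\emptyset$. Then compare $A+B$ with $(A'+B')+P'$. Using $A+B\supseteq (A+B'+P')\cup(A'+B')$, bound the part of $A+B$ lying outside $A'+B'+P'$ by counting $P'$-cosets that meet $B$ but miss $B'+P'$. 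A second induction, on $|G|$ through the quotient $G/P'$, handles the cosets whose images shrink.

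This coset-counting step is the delicate part. It must show either that the deficiency is absorbed by $|P'|$, or that $P'$ stabilises $A+B$, which forces $P'=0$.

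Since the paper simply cites the result \cite{Kneser1955AbelscheGruppen}, \cite{TaoVu2006}*{Theorem 5.5}, I would in practice quote that proof rather than reproduce it.
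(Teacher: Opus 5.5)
The paper does not prove this statement. Kneser's theorem is quoted from the literature and used as a black box, so your closing decision to cite it is exactly what the paper does. The parts of your sketch that you actually carry out are correct:
\begin{itemize}
\item the reduction to $\operatorname{Stab}(A+B)=\{0\}$ via $G/P$;
\item the identities $A'+B'\subseteq A+B$ and $|A'|+|B'|=|A|+|B|$;
\item the argument that some $e\in A-B$ gives $B'\neq B$.
\end{itemize}
But the sketch stops exactly at the step that carries the theorem. It should be presented as motivation for the citation, not as a proof.

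Concretely, suppose $P'=\operatorname{Stab}(A'+B')\neq\{0\}$. To reach $|A+B|\geq |A'|+|B'|-1$ you need at least
\[
  \bigl(|A'|+|B'|-1\bigr)-\bigl(|A'+P'|+|B'+P'|-|P'|\bigr)
\]
elements of $(A+B)\setminus(A'+B')$. This can be as large as $|P'|-1$, for instance when $A'$ and $B'$ are unions of $P'$-cosets.

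The inclusion you write produces no such elements. Since $A\subseteq A'$ and $A'+B'$ is $P'$-periodic, $A+B'+P'\subseteq A'+B'+P'=A'+B'$. Hence $(A+B'+P')\cup(A'+B')=A'+B'$, and ``the part of $A+B$ outside $A'+B'+P'$'' is just $(A+B)\setminus(A'+B')$ again.

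The natural source of new elements does not give enough either. Take $a\in A$, $b\in B$ with $a+b\notin A'+B'$; then the whole coset $a+b+P'$ avoids $A'+B'$, and $b+P'$ misses $B'+P'$. This only guarantees $|(A\cap(a+P'))+(B\cap(b+P'))|\geq\max\{|A\cap(a+P')|,|B\cap(b+P')|\}$ new elements, which may be $1$. So counting cosets that meet $B$ but miss $B'+P'$ does not by itself produce $|P'|-1$ elements.

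The quotient does not rescue this. Kneser in $G/P'$ bounds $|A+B+P'|$ from below. Since $A+B$ is not $P'$-periodic, you would still need an upper bound for $|(A+B+P')\setminus(A+B)|$, and nothing in the sketch controls it. You also introduce the minimality of $|B'|$ but never use it.

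Supplying this missing count is the real content of Kneser's theorem, and it is what the cited proofs do. As written, your sketch identifies the difficulty rather than resolving it.
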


Since $A-B=A+(-B)$, the same estimate applies to difference sets
$A-B$ with $P=\operatorname{Stab}(A-B)$. The reader interested in broader background on Kneser-type inequalities, critical
pairs, and small-doubling methods in abelian groups, could consult
\cite{Kemperman1960}, \cite {TaoVu2006} or \cite{Grynkiewicz2013StructuralAdditiveTheory}.

%% file: section3-with-primitive-quotient-reduction.tex
\section{First bounds, singleton reduction and cyclic quotients}
\label{sec:first-bounds-cyclic}

The first estimates come from decomposing the difference support over quotient
directions. We notice that singleton fibres lift to a subgroup of
$G$ disjoint from $H$ and  quotienting by this subgroup makes the transversal
\textit{primitive}, gives the uniform lower bound in terms of $m(G,H)$, and leads to an
exact formula for cyclic quotients.

The first proposition below is the support-minimisation analogue of the basic
exact-factorisation question: when does a set-theoretic complement become
a subgroup complement? Classical factorisation results of Hajós \cite{Hajos1942} and Rédei \cite{Redei1965} are related to this phenomenon.

\begin{proposition}
\label{prop:complement-criterion}
Let $H\leq G$.  Then
$
  \delta(G,H)=|G/H|$
if and only if $H$ has a subgroup complement in $G$.
\end{proposition}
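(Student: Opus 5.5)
The plan is to prove the two directions separately, working with a normalized transversal $T$ and its section $s=s_T$ from \eqref{eq:sec}. Throughout we use the fibre decomposition
\[
  D(T)=\bigsqcup_{a\in Q}\mathcal D_a(T),
\]
in which every fibre is nonempty. This decomposition immediately gives the trivial bound $|D(T)|\ge |Q|=|G/H|$ for every transversal, and equality holds exactly when every fibre is a singleton, i.e.\ when $\Sigma(T)=Q$.

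\emph{The easy direction.} Suppose $K\le G$ is a subgroup complement to $H$. Then $T=K$ meets every coset of $H$ exactly once, because $G=H+K$ and $H\cap K=\{0\}$. Since $K$ is a subgroup, $D(K)=K-K=K$, so $|D(K)|=|K|=|G/H|$. Combined with the trivial bound, this gives $\delta(G,H)=|G/H|$.

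\emph{The converse.} Suppose a normalized transversal $T$ has $|D(T)|=|G/H|$. Then every fibre is a singleton, so for each $a\in Q$ there is $d(a)\in G$ with
\[
  s(x+a)-s(x)=d(a)\qquad\text{for all } x\in Q.
\]
Setting $x=0$ and using $s(0)=0$ gives $d(a)=s(a)$. Substituting back, $s(x+a)=s(x)+s(a)$ for all $x,a\in Q$, so $s$ is a group homomorphism $Q\to G$. Hence $K:=s(Q)=T$ is a subgroup of $G$. Because $\pi\circ s=\mathrm{id}_Q$, the map $\pi$ restricted to $K$ is injective, so $K\cap H=\{0\}$. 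Because $\pi(K)=Q$, we get $H+K=G$. Thus $K$ is a subgroup complement to $H$.

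Neither direction presents a real obstacle. The only point that needs care is the normalization: we must pass to a translate of $T$ containing $0$ so that the lifted singleton values coincide with the section values, $d(a)=s(a)$. This is harmless, since translation does not change $D(T)$. The same computation previews the later fact that $\widetilde\Sigma(T)$ is always a subgroup disjoint from $H$.
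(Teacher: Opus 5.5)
Your proof is correct and is essentially the paper's argument. The paper gets the lower bound from $T-t_0\subseteq D(T)$, and in the equality case with $0\in T$ it deduces $T=D(T)$, so $T$ is closed under subtraction. That is exactly your observation that every fibre equals $\{s(a)\}$, i.e.\ that $s$ is a homomorphism, here phrased through the fibre decomposition as the extreme case $\Sigma(T)=Q$ of the singleton reduction.
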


\begin{proof}
Let $q=|G/H|$.  For every transversal $T$ and every fixed $t_0\in T$, the
translate $T-t_0$ has $q$ elements and is contained in $D(T)=T-T$.  Hence
$|D(T)|\ge q$ for every $T$, so $\delta(G,H)\ge q$.

If $K\leq G$ is a subgroup complement to $H$, then $K$ is a transversal for
$G/H$ and $D(K)=K-K=K$. Thus $\delta(G,H)\le |K|=q$, and equality follows.

Conversely, suppose $\delta(G,H)=q$, and choose a transversal $T$ with
$|D(T)|=q$.  Translating $T$ if necessary, assume $0\in T$.  Then
$T\subseteq D(T)$ because $t=t-0$ for every $t\in T$.  Since the two sets have
the same cardinality, $T=D(T)$.  Therefore $T$ is closed under subtraction.
As $T$ is finite and contains $0$, it is a subgroup of $G$.  Since $T$ is also a
transversal for $G/H$, it is a subgroup complement to $H$.
\end{proof}

\subsection{Singleton directions and primitive quotient transversals}
\label{subsec:singleton-primitive-quotient}

We use the notation $\Sigma(T)$, $d_T(a)$, and $\widetilde\Sigma(T)$ from
Section~\ref{sec:preliminaries}.  The next proposition explains the quotient
reduction behind primitive transversals.

\begin{proposition}
\label{prop:primitive-quotient-reduction}
Let $H\leq G$, and let $T$ be a normalized transversal for $G/H$.  Then:
\begin{enumerate}
  \item $\Sigma(T)$ is a subgroup of $G/H$;

  \item the map
  \[
    \Sigma(T)\longrightarrow G,
    \qquad a\longmapsto d_T(a),
  \]
  is an injective group homomorphism, its image $\widetilde\Sigma(T)$ is a
  subgroup of $G$, and
  \[
    \widetilde\Sigma(T)\cap H=\{0\},
    \qquad
    |\widetilde\Sigma(T)|=|\Sigma(T)|;
  \]

  \item one has
  \[
    T+\widetilde\Sigma(T)=T,
    \qquad
    D(T)+\widetilde\Sigma(T)=D(T),
  \]
  so $D(T)$ is a union of cosets modulo $\widetilde\Sigma(T)$;

  \item if
  \[
    \overline G=G/\widetilde\Sigma(T),
    \qquad
    \overline H=(H+\widetilde\Sigma(T))/\widetilde\Sigma(T),
  \]
  and $\overline T$ denotes the image of $T$ in $\overline G$, then
  $\overline T$ is a transversal for $\overline G/\overline H$ and
  \[
    |D(T)|=|\widetilde\Sigma(T)|\,|D(\overline T)|;
  \]

  \item the transversal $\overline T$ has no nonzero singleton directions.
\end{enumerate}
\end{proposition}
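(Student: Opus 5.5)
The whole argument runs through the section $s=s_T$ and the defining identity $s(x+a)=s(x)+d_T(a)$ for all $x\in Q$ whenever $a\in\Sigma(T)$. The order is (1) and (2) together, then (3), (4) and (5).

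For (1) and (2), note first that $0\in\Sigma(T)$ with $d_T(0)=0$. Given $a,b\in\Sigma(T)$, apply the identity twice:
\[
s(x+a+b)=s(x+a)+d_T(b)=s(x)+d_T(a)+d_T(b).
\]
So $\mathcal D_{a+b}(T)$ is the single element $d_T(a)+d_T(b)$. Hence $a+b\in\Sigma(T)$ and $d_T(a+b)=d_T(a)+d_T(b)$. In a finite group, closure under addition together with $0$ already gives a subgroup, which is (1). The same computation shows that $d_T$ is a homomorphism. Since $\pi(d_T(a))=a$, we have $\pi\circ d_T=\mathrm{id}$. This gives injectivity, and it also gives $\widetilde\Sigma(T)\cap H=\{0\}$: if $d_T(a)\in H$ then $a=\pi(d_T(a))=0$. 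The cardinality statement follows from injectivity.

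For (3), take $t=s(x)$. Then $t+d_T(a)=s(x+a)\in T$, so $T+\widetilde\Sigma(T)\subseteq T$, and equality holds because $0\in\widetilde\Sigma(T)$. For differences, $(t-t')+d=(t+d)-t'\in D(T)$, which gives $D(T)+\widetilde\Sigma(T)=D(T)$.

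For (4), write $K=\widetilde\Sigma(T)$ and $\overline H=(H+K)/K$. Then $\overline G/\overline H\cong G/(H+K)$.
\begin{itemize}
\item The image $\overline T$ meets every coset of $H+K$, since $T$ meets every coset of $H$.
\item Since $T+K=T$, the set $T$ is a union of $K$-cosets, so $|\overline T|=|T|/|K|=|G/H|/|K|$.
\item Because $H\cap K=0$, we have $|H+K|=|H||K|$, so $|\overline G/\overline H|=|G|/(|H||K|)$, which equals $|\overline T|$.
\end{itemize}
Thus $\overline T$ surjects onto $\overline G/\overline H$ with the right cardinality, so it is a transversal. Next, $D(\overline T)$ is the image of $D(T)$ in $\overline G$. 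By (3), $D(T)$ is a union of $K$-cosets, so $|D(T)|=|K|\,|D(\overline T)|$.

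Part (5) is the step I expect to be the main obstacle. Suppose $\bar a\ne 0$ is a singleton direction for $\overline T$ with value $\bar d$. I would try to lift it to a singleton direction of $T$ lying outside $\Sigma(T)$, which is a contradiction.

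First fix $\overline Q=\overline G/\overline H\cong Q/\Sigma(T)$; here $\Sigma(T)=\pi(K)$. Choose a lift $a\in Q$ of $\bar a$; then $a\notin\Sigma(T)$. The singleton condition says that every difference $s(x+a)-s(x)$ lies in the single coset $d+K$ for some fixed lift $d$ of $\bar d$.

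Now count. The fibre $\mathcal D_a(T)$ consists of elements mapping to $a$ under $\pi$ and lying in $d+K$. Two elements of $d+K$ with the same $\pi$-image differ by an element of $K\cap H=\{0\}$. So $\mathcal D_a(T)$ has at most one element, and hence exactly one. This forces $a\in\Sigma(T)$, contradicting $a\notin\Sigma(T)$. So $\overline T$ has no nonzero singleton directions.

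The care needed is in identifying the quotient directions of $\overline T$ correctly: a difference class of $\overline T$ in direction $\bar a$ must come from differences of $T$ in some direction $a+\sigma$ with $\sigma\in\Sigma(T)$. These fibres are translates of one another by $K$, so the argument above applies to any one of them. Finally, $\overline T$ is normalized because $0\in T$, so it is primitive in the sense of Definition \ref{def:primitive-transversal}.
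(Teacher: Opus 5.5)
Your proof is correct and is essentially the paper's argument: (1)--(3) match it step for step, and in (4) you only vary the bijectivity step, getting it from surjectivity plus the count $|\overline T|=|G/H|/|\widetilde\Sigma(T)|=|\overline G/\overline H|$ rather than from the paper's direct uniqueness argument via $T+\widetilde\Sigma(T)=T$. In (5), your observation that $\pi$ is injective on $d+\widetilde\Sigma(T)$ (because $\widetilde\Sigma(T)\cap H=\{0\}$) is a slightly more direct form of the paper's pigeonhole count over the $|\Sigma(T)|$ disjoint nonempty fibres above $a+\Sigma(T)$, and it lets you dispense with the translate identity $\mathcal D_{a+k}(T)=\mathcal D_a(T)+d_T(k)$.
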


\begin{proof}
Put $Q=G/H$ and write $s=s_T$ for the normalized section attached to $T$.
The zero direction is singleton, since $\mathcal D_0(T)=\{0\}$.  Let
$a,b\in\Sigma(T)$.  Then, for every $x\in Q$,
\[
  s(x+a)-s(x)=d_T(a),
  \qquad
  s(x+b)-s(x)=d_T(b).
\]
Hence
\[
\begin{aligned}
  s(x+a+b)-s(x)
  &=\bigl(s(x+a+b)-s(x+a)\bigr)+\bigl(s(x+a)-s(x)\bigr) \\
  &=d_T(b)+d_T(a).
\end{aligned}
\]
Thus $a+b$ is a singleton direction and
\[
  d_T(a+b)=d_T(a)+d_T(b).
\]
Since $Q$ is finite, closure under addition and the presence of $0$ imply that
$\Sigma(T)$ is a subgroup.  The displayed identity also shows that
$a\mapsto d_T(a)$ is a group homomorphism.  This proves (1) and the homomorphism
assertion in (2).

For $a\in\Sigma(T)$ we have
\[
  \pi(d_T(a))=a,
\]
because $d_T(a)=s(x+a)-s(x)$ for every $x\in Q$.  Hence the homomorphism
$a\mapsto d_T(a)$ is injective, and its image $\widetilde\Sigma(T)$ is a
subgroup of $G$ with $|\widetilde\Sigma(T)|=|\Sigma(T)|$.  If
$d_T(a)\in H$, then $a=\pi(d_T(a))=0$, hence $d_T(a)=0$.  Therefore
\[
  \widetilde\Sigma(T)\cap H=\{0\}.
\]
This proves (2).

If $\sigma=d_T(a)\in\widetilde\Sigma(T)$, then
\[
  s(x+a)=s(x)+\sigma
\]
for every $x\in Q$.  As $x$ varies over $Q$, so does $x+a$, and therefore
translation by $\sigma$ permutes the elements of $T$.  Thus
\[
  T+\widetilde\Sigma(T)=T.
\]
Subtracting gives
\[
  D(T)+\widetilde\Sigma(T)=D(T),
\]
so $D(T)$ is a union of cosets modulo $\widetilde\Sigma(T)$.

Let $q_{\natural}:G\to\overline G$ be the quotient map.  We prove (4).  The
image $\overline T=q_{\natural}(T)$ plainly meets every
$\overline H$-coset.  For uniqueness, suppose that $q_{\natural}(t_1)$ and
$q_{\natural}(t_2)$ lie in the same $\overline H$-coset.  Then
\[
  t_1-t_2\in H+\widetilde\Sigma(T).
\]
Write $t_1-t_2=h+\sigma$, with $h\in H$ and
$\sigma\in\widetilde\Sigma(T)$.  Since $T+\widetilde\Sigma(T)=T$, the element
$t_2+\sigma$ belongs to $T$.  But
\[
  t_1-(t_2+\sigma)=h\in H.
\]
Since $T$ is a transversal for $G/H$, this forces $t_1=t_2+\sigma$, hence
$q_{\natural}(t_1)=q_{\natural}(t_2)$.  Thus $\overline T$ is a transversal.

Moreover,
\[
  q_{\natural}(D(T))=q_{\natural}(T-T)=\overline T-\overline T=D(\overline T).
\]
Since $D(T)$ is a union of $\widetilde\Sigma(T)$-cosets, each element of
$D(\overline T)$ has exactly $|\widetilde\Sigma(T)|$ preimages in $D(T)$.  Hence
\[
  |D(T)|=|\widetilde\Sigma(T)|\,|D(\overline T)|.
\]

It remains to show that $\overline T$ is primitive.  Under the natural
identification
\[
  \overline G/\overline H
  \simeq
  G/(H+\widetilde\Sigma(T))
  \simeq
  (G/H)/\Sigma(T),
\]
suppose that a nonzero class $\overline a$ has singleton fibre for
$\overline T$, and choose a representative $a\in Q$.  Then, for some $d\in G$,
\[
  \mathcal D_a(T)\subseteq d+\widetilde\Sigma(T).
\]
Indeed, the quotient fibre in direction \(\overline a\) is the image of the
fibres \(\mathcal D_{a+k}(T)\), \(k\in\Sigma(T)\); if that image is a singleton,
then each such fibre lies in one \(\widetilde\Sigma(T)\)-coset.
For every $k\in\Sigma(T)$, the fibre in direction $a+k$ satisfies
\[
\begin{aligned}
  \mathcal D_{a+k}(T)
  &=\{s(x+a+k)-s(x):x\in Q\} \\
  &=\{s(x+a)+d_T(k)-s(x):x\in Q\} \\
  &=\mathcal D_a(T)+d_T(k).
\end{aligned}
\]
Thus the union of the fibres above $a+\Sigma(T)$ lies in the single coset
$d+\widetilde\Sigma(T)$, whose size is
$|\widetilde\Sigma(T)|=|\Sigma(T)|$.  The $|\Sigma(T)|$ fibres in this union are
nonempty and disjoint because they lie over distinct quotient directions.
Hence each is a singleton.  In particular $a\in\Sigma(T)$, so
$\overline a=0$, a contradiction.  Thus $\overline T$ has no nonzero singleton
directions.
\end{proof}

Every transversal becomes primitive after quotienting by its lifted singleton
subgroup.  This is the structural reason for the lower bound below.

\begin{proposition}
\label{prop:quotient-fibre-lower-bound}
For every $H\leq G$ one has
\[
  \delta(G,H)\geq 2|G/H|-m(G,H).
\]
More precisely, for every normalized transversal $T$ one has
\[
  |D(T)|\geq 2|G/H|-|\Sigma(T)|.
\]
\end{proposition}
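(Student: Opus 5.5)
Counting fibre by fibre will do it. By the decomposition $D(T)=\bigsqcup_{a\in Q}\mathcal D_a(T)$ from Section~\ref{sec:preliminaries}, we have
\[
  |D(T)|=\sum_{a\in Q}|\mathcal D_a(T)|.
\]
Each fibre is nonempty. For $a\notin\Sigma(T)$ the fibre has at least two elements by definition, and for $a\in\Sigma(T)$ it has exactly one. Hence
\[
  |D(T)|\ge |\Sigma(T)|+2\bigl(|G/H|-|\Sigma(T)|\bigr)=2|G/H|-|\Sigma(T)|,
\]
which is the refined inequality. This step needs no structure beyond the disjointness of the fibres over distinct quotient directions.

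For the uniform bound I then bring in Proposition~\ref{prop:primitive-quotient-reduction}(2). The lifted set $\widetilde\Sigma(T)$ is a subgroup of $G$ with $\widetilde\Sigma(T)\cap H=\{0\}$ and $|\widetilde\Sigma(T)|=|\Sigma(T)|$. By the definition \eqref{eq:defmGH} of $m(G,H)$ this gives $|\Sigma(T)|\le m(G,H)$. Therefore $|D(T)|\ge 2|G/H|-m(G,H)$ for every normalized transversal $T$.

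Since translation does not change $D(T)$, every transversal may be assumed normalized. Taking the minimum over $T$ then yields $\delta(G,H)\ge 2|G/H|-m(G,H)$.

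I expect no real obstacle here. The only substantive input is that the singleton directions lift to a subgroup meeting $H$ trivially, and that is already established in Proposition~\ref{prop:primitive-quotient-reduction}.
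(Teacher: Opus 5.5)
Your proposal is correct and follows essentially the same route as the paper. The fibre count gives $|D(T)|\ge 2|G/H|-|\Sigma(T)|$, and Proposition~\ref{prop:primitive-quotient-reduction}(2) then gives $|\Sigma(T)|=|\widetilde\Sigma(T)|\le m(G,H)$; your remark that translation invariance of $D(T)$ justifies restricting to normalized transversals before taking the minimum makes explicit a step the paper leaves implicit.
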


\begin{proof}
Let $T$ be a normalized transversal for $G/H$.  Put $Q=G/H$, $q=|Q|$, and
$k=|\Sigma(T)|$.  By the fibre decomposition,
\[
  |D(T)|
  =\sum_{a\in Q}|\mathcal D_a(T)|
  \geq k+2(q-k)
  =2q-k,
\]
because the $k$ singleton fibres have size $1$ and all other fibres are nonempty
of size at least $2$.  This proves the more precise bound.  By
Proposition~\ref{prop:primitive-quotient-reduction}, the lifted singleton
subgroup $\widetilde\Sigma(T)$ is disjoint from $H$, so
$k=|\widetilde\Sigma(T)|\leq m(G,H)$.
Thus
\[
  |D(T)|\geq 2q-m(G,H).
\]
Taking the minimum over all transversals proves the proposition.
\end{proof}

\begin{remark}
Looking at the proof above, if $T$ is a normalized
transversal, it is easy to see that the equality
\[
  |D(T)|=2|G/H|-m(G,H)
\]
holds if and only if
\[
  |\Sigma(T)|=m(G,H)
\]
and every non-singleton quotient-direction fibre $\mathcal D_a(T)$ has size
exactly $2$.
\end{remark}

\subsection{Cyclic quotients}
\label{subsec:cyclic-quotients}

The bound of Proposition~\ref{prop:quotient-fibre-lower-bound} is sharp for all
cyclic quotients.
\begin{theorem}
\label{thm:cyclic-quotient-formula}
Let $H\leq G$, and suppose that $G/H$ is cyclic of order $q$.  Then
\[
  \delta(G,H)=2q-m(G,H).
\]
\end{theorem}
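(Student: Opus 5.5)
The lower bound $\delta(G,H)\ge 2q-m(G,H)$ is Proposition~\ref{prop:quotient-fibre-lower-bound}, so the plan is to exhibit a transversal $T$ with $|D(T)|=2q-m(G,H)$. By the Remark after that proposition, this requires $|\Sigma(T)|=m(G,H)$ with all other fibres of size exactly $2$. I would build such a $T$ from a single lift of a generator.

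Fix a generator $\bar g$ of $Q=G/H\simeq \mathbb Z/q\mathbb Z$ and choose a lift $g\in G$. Let $K=\langle g\rangle$, so that $K\cap H=\langle qg\rangle$ has some order $r$, and $|K|=qr$. Put
\[
  T=\{0,g,2g,\dots,(q-1)g\}.
\]
This is clearly a transversal, with section $s(i\bar g)=ig$ for $0\le i<q$. For a direction $a=j\bar g$ with $0<j<q$, the difference $s(x+a)-s(x)$ equals $jg$ if no wraparound occurs and $jg-qg$ otherwise, and both cases occur. Hence $\mathcal D_a(T)=\{jg,\,jg-qg\}$, which has size $2$ exactly when $qg\neq0$, that is, when $r>1$. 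The count is therefore $|D(T)|=1+2(q-1)=2q-1$ when $r>1$, and $|D(T)|=q$ when $r=1$; in the latter case $K$ is a subgroup complement.

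This is optimal only when $m(G,H)=1$, so the general case needs more care. I would reduce to that case through the singleton subgroup. Choose $L\le G$ with $L\cap H=0$ and $|L|=m:=m(G,H)$. Its image $\pi(L)$ is a subgroup of the cyclic group $Q$ of order $m$, and $Q/\pi(L)$ is cyclic of order $q/m$. Pass to $\overline G=G/L$ and $\overline H=(H+L)/L\simeq H$. Then the first construction, applied in $\overline G$, gives a transversal $\overline T$ for $\overline G/\overline H$ with $|D(\overline T)|\le 2(q/m)-1$.

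Now pull back: $T=s(\overline T)+L$ for any set-theoretic lift $s$. This is a transversal for $G/H$, because $L$ maps injectively onto $\pi(L)$ and $\overline T$ hits each coset of $\pi(L)$ in $Q$ exactly once. Moreover $D(T)=D(s\overline T)+L$ is a union of $L$-cosets mapping onto $D(\overline T)$, so
\[
  |D(T)|=m\,|D(\overline T)|\le m\Bigl(\frac{2q}{m}-1\Bigr)=2q-m .
\]
Combined with Proposition~\ref{prop:quotient-fibre-lower-bound}, this gives equality.

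The main obstacle is verifying that $D(T)$ is exactly $D(s\overline T)+L$ with cosets counted correctly. Since $L$ is a subgroup and $T=s(\overline T)+L$, we have $T-T=s\overline T-s\overline T+L$, whose image in $\overline G$ is $D(\overline T)$. It has exactly $m$ preimages per element, being $L$-invariant. This mirrors part (4) of Proposition~\ref{prop:primitive-quotient-reduction}, so I expect it to go through routinely. The edge case $q/m=1$ needs no separate treatment: it is just the complement case, handled by Proposition~\ref{prop:complement-criterion}.
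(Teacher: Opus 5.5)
Your proposal is correct and follows essentially the paper's proof. Both use the lower bound from Proposition~\ref{prop:quotient-fibre-lower-bound}, the progression transversal $\{0,g,\dots,(q-1)g\}$, and a reduction modulo a subgroup $L$ of order $m(G,H)$ disjoint from $H$, lifted back as $B+L$. One small streamlining: you only use the bound $|D(\overline T)|\le 2(q/m)-1$, which the progression construction gives for any cyclic quotient, so you can skip the paper's check that $(G/L,(H+L)/L)$ is residual, with equality then forced by the lower bound.
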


\begin{proof}
The lower bound is Proposition~\ref{prop:quotient-fibre-lower-bound}.
We first handle the residual case \(m(G,H)=1\).  Choose \(g\in G\) whose image
generates \(G/H\).  If \(q=1\), there is nothing to prove.  For \(q>1\), the
integer \(q\) divides \(\operatorname{ord}(g)\).  The equality
\(\operatorname{ord}(g)=q\) would make \(\langle g\rangle\) a nontrivial
subgroup disjoint from \(H\), contradicting residuality.  Hence
$
  \operatorname{ord}(g)\geq 2q.
$

Now observe that
$
  T=\{0,g,2g,\ldots,(q-1)g\}
$
is a transversal for \(G/H\), and
$
  D(T)=\{kg:-(q-1)\leq k\leq q-1\}
$.
These \(2q-1\) elements are distinct because any two exponents in the displayed
range differ by an integer of absolute value at most \(2q-2\), which is
strictly smaller than \(\operatorname{ord}(g)\).  Thus
\(\delta(G,H)=2q-1\) in the residual cyclic case.

For the general case, let
$K\leq G$ be a subgroup disjoint from $H$ with $|K|=m(G,H)$, and write
$m=|K|$.  Since $G/H$ is cyclic, the image of $K$ in $G/H$ is the unique
subgroup of order $m$.  Hence
\[
  (G/K)/((H+K)/K)\simeq G/(H+K)
\]
is cyclic of order $s=q/m$.

The pair $(G/K,\,(H+K)/K)$ is residual.  Indeed, if $L/K\leq G/K$ were nontrivial and disjoint from
$(H+K)/K$, then $L\cap(H+K)=K$.
In particular, $L\cap H=\{0\}$, because $K\cap H=\{0\}$.  But then $L$ would be
a subgroup of $G$ disjoint from $H$ and strictly larger than $K$, contradicting
the maximality of $K$. 

By the residual case just proved, there is a transversal
$\overline B\subseteq G/K$ for
\[
  (G/K)/((H+K)/K)
\]
such that
\[
  |\overline B-\overline B|=2s-1.
\]
Choose representatives $B\subseteq G$ for $\overline B$, and define
\[
  T=B+K.
\]
Since $B$ represents the quotient $G/(H+K)$ and $K$ maps isomorphically onto its
image in $G/H$, the set $T$ is a transversal for $G/H$.  Moreover,
\[
  D(T)=(B-B)+K.
\]
Modulo $K$, this support is exactly $\overline B-\overline B$, so it is a union
of $2s-1$ distinct $K$-cosets.  Therefore
\[
  |D(T)|=|K|(2s-1)=m\left(2\frac qm-1\right)=2q-m.
\]
Together with the lower bound, this proves the formula.
\end{proof}

For a cyclic ambient group with a prescribed subgroup, the formula becomes
completely explicit. In what follows, we write $C_n$ for the cyclic subgroup of order $n$, for any $n \geq 1$.

\begin{corollary}
\label{cor:cyclic-fixed-stage-formula}
Let $h,q\geq 1$, let $G=C_{hq}$, and let $H\leq G$ be the subgroup of order
$h$.  If $c$ is the largest divisor of $q$ that is coprime to $h$, then
\[
  \delta(G,H)=2q-c.
\]
\end{corollary}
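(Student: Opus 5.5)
The plan is to apply Theorem~\ref{thm:cyclic-quotient-formula} directly. Its hypothesis holds because $G/H\simeq C_{hq}/C_h\simeq C_q$ is cyclic of order $q$, so $\delta(G,H)=2q-m(G,H)$. Everything then reduces to showing that $m(G,H)=c$, where $c$ is the largest divisor of $q$ coprime to $h$.

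For this we use that $G=C_{hq}$ is cyclic, so its subgroups are exactly the $C_d$ with $d\mid hq$, one for each divisor, and $C_d\cap C_h=C_{\gcd(d,h)}$. Hence $C_d\cap H=\{0\}$ if and only if $\gcd(d,h)=1$, and
\[
  m(G,H)=\max\{d: d\mid hq,\ \gcd(d,h)=1\}.
\]
The remaining step is to show that this maximum equals $c$.

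The only mild point is that a priori $d$ divides $hq$ rather than $q$. If $d\mid hq$ and $\gcd(d,h)=1$, then Euclid's lemma gives $d\mid q$. So the admissible $d$ are precisely the divisors of $q$ coprime to $h$, and their maximum is $c$ by definition. We can also note, as a consistency check, that $c\le q$ as it must be. Substituting gives $\delta(G,H)=2q-c$.

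I expect no real obstacle beyond stating the subgroup-intersection fact $C_d\cap C_h=C_{\gcd(d,h)}$ in a cyclic group. It follows because $C_d\cap C_h$ is the unique subgroup of order dividing both $d$ and $h$, and it contains $C_{\gcd(d,h)}$.
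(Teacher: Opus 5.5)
Your proposal is correct and follows the paper's proof: apply the cyclic quotient formula (Theorem~\ref{thm:cyclic-quotient-formula}) and identify $m(G,H)=c$ using the uniqueness of subgroups of each order in a cyclic group. The only cosmetic difference is that you obtain $d\mid q$ from Euclid's lemma ($d\mid hq$ and $\gcd(d,h)=1$), whereas the paper observes that a subgroup disjoint from $H$ injects into $G/H\simeq C_q$.
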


\begin{proof}
In a cyclic group there is a unique subgroup of each order.  A subgroup of
order $k$ intersects $H$ trivially if and only if $\gcd(k,h)=1$.  Since such a
subgroup must inject into $G/H$, its order must also divide $q$.  Therefore
$m(G,H)$ is exactly the largest divisor $c$ of $q$ that is coprime to $h$.  The
claim follows from Theorem~\ref{thm:cyclic-quotient-formula}.
\end{proof}

\begin{example}[Mixed-prime square-modulus products]
\label{ex:mixed-prime-square-modulus-formula}
Let $p_1,\ldots,p_s$ be distinct primes, let $n=\prod_i p_i$, and put
\[
  G=\prod_{i=1}^s C_{p_i^2},
  \qquad
  H=\prod_{i=1}^s p_iC_{p_i^2}.
\]
Then
\[
  \delta(G,H)=2n-1.
\]
Indeed, the Chinese remainder theorem identifies $(G,H)$ with
$
  (C_{n^2},\,nC_{n^2}).
$
Here $|H|=n$ and $|G/H|=n$.  In
Corollary~\ref{cor:cyclic-fixed-stage-formula}, with $h=q=n$, the largest
divisor of $q$ coprime to $h$ is $1$.  Hence
$
  \delta(G,H)=2n-1
$.
Thus square-modulus products with distinct prime factors behave cyclically.  The
first obstruction not covered by the cyclic analysis is therefore not a product
over distinct primes, but a same-prime \(p\)-primary quotient of rank at least
two.  This is the source of the square-plane problem studied later in the manuscript.
\end{example}

Such CRT and product constructions are frequent in the algebraic-tiling
and finite-abelian factorisation literature
\cites{Stein1974,Dinitz2006FullRankTilings,SzaboSands2009FactoringGroups}.

%% file: section4-chains-products-kneser-updated.tex
\section{Chains, products, and cross-transversal estimates}
\label{sec:chains-products-kneser}

The aim of this section is to study the behaviour of the invariant $\delta$ with respect to subgroup chains and direct products.  The technique we use for this study involves
slicing a transversal over an intermediate quotient.  Kneser's theorem then
upgrades the elementary nonzero-fibre count and gives exact product families
with one nonsplit cyclic coordinate and arbitrary split factors. The resulting estimates are cross-transversal bounds based on Kneser's theorem, and
should be read against the structural additive theory of small sumsets in
finite abelian groups \cites{Kemperman1960,TaoVu2006,Grynkiewicz2013StructuralAdditiveTheory}.

We start with some elementary chain bounds: a multiplicative construction and
two lower bounds coming from the zero and nonzero quotient fibres.
\begin{proposition}
\label{prop:chain-inequalities}
Let
$
  H\leq K\leq G$,
and write
\[
  d_0=\delta(K,H),
  \qquad
  d_1=\delta(G,K),
  \qquad
  r=|K/H|,
  \qquad
  q=|G/K|.
\]
Then
\[
\max\{d_0+d_1-1,\ d_0+(q-1)r\} \leq  \delta(G,H)\leq d_0d_1.
\]
\end{proposition}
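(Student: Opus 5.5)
We prove the upper bound by a product construction and the two lower bounds by slicing an optimal transversal $T$ for $G/H$ over the intermediate quotient $G/K$.

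\emph{Upper bound.} Choose a transversal $A\subseteq K$ for $K/H$ with $|A-A|=d_0$, and a transversal $B\subseteq G$ for $G/K$ with $|B-B|=d_1$. Put $T=A+B$. We first check that $T$ is a transversal for $G/H$. Given $g\in G$, there is a unique $b\in B$ with $g-b\in K$. There is then a unique $a\in A$ with $g-b-a\in H$. Conversely, if $a+b\equiv a'+b'\pmod H$, reduction modulo $K$ forces $b=b'$, and then $a=a'$. Hence $D(T)=(A-A)+(B-B)$, so $|D(T)|\le d_0d_1$.

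\emph{Lower bounds.} Let $T$ be a normalized transversal for $G/H$ and put $Q=G/H$. The subgroup $K/H\le Q$ has order $r$, and $T_0=T\cap K$ is a transversal for $K/H$. The image $\overline T$ of $T$ in $G/K$ has size $q$ and is the whole of $G/K$.

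We split $D(T)$ into the part lying in $K$ and the part outside $K$. The part in $K$ contains $T_0-T_0$, so it has at least $d_0$ elements.

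For the part outside $K$, fix a nonzero class $c\in G/K$ and look at the fibres $\mathcal D_a(T)$ for the $r$ directions $a\in Q$ lying over $c$. These fibres are nonempty and pairwise disjoint. Hence $D(T)$ meets the coset $c$ in at least $r$ elements, and summing over the $q-1$ nonzero classes gives $|D(T)|\ge d_0+(q-1)r$.

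For the bound $d_0+d_1-1$, pick one element of $T$ over each class of $G/K$; call the resulting set $S$. It is a transversal for $G/K$, so $|S-S|\ge d_1$. The elements of $S-S$ lying outside $K$ are at least $d_1-1$ in number, because at most $|(S-S)\cap K|$ of its elements lie in $K$. I must be careful here, since $(S-S)\cap K$ may be larger than $\{0\}$.

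The main obstacle is exactly this point: we need $D(T)\setminus K$ to have at least $d_1-1$ elements. To handle it, choose $S$ to contain $0$ and consider the modified set $S'=T_0\cup(S\setminus\{0\})$. I claim instead the simpler fact $|D(T)\setminus K|\ge |D(S)|-|D(S)\cap K|$, and I bound $|D(S)\cap K|$ by absorbing it into the $K$-part. Concretely, write
\[
|D(T)|\ge |D(T)\cap K|+|D(S)\setminus K|,
\qquad
D(T)\cap K\supseteq (T_0-T_0)\cup (D(S)\cap K).
\]
This is unresolved: the union on the right could exceed $d_0$ only by $|D(S)\cap K|-1$ in the worst case, and if it does not, the displayed inequality alone gives only $d_0+d_1-|D(S)\cap K|$.

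A cleaner route, which I would try first, is to count over nonzero classes $c$ only. Each such class contributes at least one element of $D(T)$, and the number of nonzero classes hit is $q-1\ge d_1-1$ whenever $d_1-1\le q-1$. Since $d_1\ge q$ in general, this fails when $d_1>q$. So for the $d_0+d_1-1$ bound I would instead choose the slice adaptively: for each nonzero class $c$ of $G/K$ realised in $D(S)$, the $G$-elements of $D(S)$ over $c$ are distinct elements of $D(T)$ outside $K$. Their number is $|D(S)|-|D(S)\cap K|$, and $(S-S)\cap K=\{0\}$ if $S$ is chosen with $S-S$ meeting $K$ trivially. I expect that arranging this, or replacing $d_1$ by the count of differences of the slice outside $K$, is the crux. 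I do not yet see how to guarantee it in general.
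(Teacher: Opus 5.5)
Your upper bound and your bound $d_0+(q-1)r$ are correct. The latter uses the $r$ nonempty, pairwise disjoint fibres $\mathcal D_a(T)$ lying over each nonzero class of $G/K$, which is the same count as the paper's ``cross-difference surjects onto $K/H$''. The genuine gap is the bound $d_0+d_1-1$, which you leave unresolved. Your sentence ``the elements of $S-S$ lying outside $K$ are at least $d_1-1$ in number, because at most $|(S-S)\cap K|$ of its elements lie in $K$'' proves nothing unless you know $|(S-S)\cap K|=1$. The later detours do not supply this: the modified set $S'$, the adaptive choice of slices, and counting nonzero classes.

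The missing observation is elementary, and it holds for \emph{every} choice of $S$: one always has $(S-S)\cap K=\{0\}$. Indeed, $S$ contains exactly one element of each $K$-coset. So for $s,s'\in S$ one has $s-s'\in K$ if and only if $s$ and $s'$ lie in the same $K$-coset, that is, $s=s'$. Nothing needs to be arranged.

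Consequently $D(S)\setminus K$ has exactly $|D(S)|-1\geq d_1-1$ elements. They all lie in $D(T)$, because $S\subseteq T$, and they lie outside $K$. Meanwhile $D(T)\cap K\supseteq T_0-T_0$ has at least $d_0$ elements. These two parts are disjoint, so $|D(T)|\geq d_0+d_1-1$. This is precisely the paper's argument: there, the representatives $U_T$ form a transversal for $G/K$, and $D(U_T)$ meets the zero $G/K$-fibre only in $0$. With this one line inserted, and the speculative final two paragraphs deleted, your proof is complete.
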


\begin{proof}
Choose a transversal $V\subseteq K$ for $K/H$ with $|D(V)|=d_0$, and choose a
transversal $U\subseteq G$ for $G/K$ with $|D(U)|=d_1$.  Then
\[
  T=U+V
\]
is a transversal for $G/H$.  Moreover
\[
  D(T)=(U+V)-(U+V)\subseteq D(U)+D(V).
\]
Thus
\[
  \delta(G,H)\leq |D(T)|\leq |D(U)|\,|D(V)|=d_1d_0.
\]

For the first lower bound, let $T$ be a transversal for $G/H$, and write
\(T_y=T\cap \pi_{G,K}^{-1}(y)\) for its slice over \(y\in G/K\).  Choose one
element of each nonempty slice; these elements form a transversal $U_T$ for
$G/K$, so $|D(U_T)|\ge d_1$.  Each translated slice is a transversal for
\(K/H\), so the zero $G/K$-fibre of $D(T)$ contains at least \(d_0\) elements.
Since $D(U_T)$ meets that zero fibre only in $0$,
\[
  |D(T)|\geq d_0+d_1-1.
\]

For the second lower bound, one translated slice gives at least $d_0$ elements
in the zero fibre.  In every nonzero $G/K$-direction, the cross-difference
between two corresponding slices maps onto all of \(K/H\): the two slices each
meet every \(H\)-coset inside their \(K\)-coset.  Hence that fibre has at least
$r=|K/H|$ elements, and
\[
  |D(T)|\geq d_0+(q-1)r.
\]
Taking the minimum over all $T$ proves the proposition.
\end{proof}

The proof above isolates the point where information is lost: in each nonzero
$G/K$-direction it uses only that a cross-difference of two slices surjects onto
$K/H$.  Kneser's theorem gives a uniform replacement for this crude fibre count.

\begin{theorem}
\label{thm:kneser-cross-transversal}
Let $H\leq G$, put $q=|G/H|$, and let $A,B\subseteq G$ be transversals for
$G/H$.  Then
\[
  |A-B|\geq 2|G/H|-m(G,H).
\]
\end{theorem}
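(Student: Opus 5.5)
The plan is to apply Kneser's theorem (Theorem~\ref{thm:kneser}) to the pair $A,-B$, so that $A-B=A+(-B)$. Put $P=\operatorname{Stab}(A-B)$. Kneser then gives
\[
  |A-B|\geq |A+P|+|{-B}+P|-|P|.
\]
Note that $-B$ is again a transversal for $G/H$. The whole argument then reduces to a lower bound for $|X+P|$ when $X$ is an arbitrary transversal and $P\leq G$ is an arbitrary subgroup, followed by a two-case split according to whether $P\cap H$ is trivial.

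\emph{Step 1: the estimate $|X+P|\geq q\,|P\cap H|$ for every transversal $X$ and every subgroup $P$.} I will count inside the cosets of the subgroup $P+H$, of which there are $q\,|H|/|P+H|$. Each $(P+H)$-coset is a union of $H$-cosets, so it meets $X$. The set $X+P$ is a union of $P$-cosets, and any $(P+H)$-coset meeting $X$ contains at least one full $P$-coset of $X+P$. Hence $X+P$ contains at least $|P|$ elements in each $(P+H)$-coset. Using $|P+H|=|P|\,|H|/|P\cap H|$, this gives
\[
  |X+P|\geq |P|\cdot\frac{q\,|H|}{|P+H|}=q\,|P\cap H|.
\]
This crude bound is sufficient, because a single $P$-coset can indeed absorb all points of $X$ lying in one $(P+H)$-coset.

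\emph{Step 2: the case split.} Combining Kneser's inequality with Step 1 yields
\[
  |A-B|\geq 2q\,|P\cap H|-|P|.
\]
If $P\cap H=\{0\}$, then $P$ is a subgroup disjoint from $H$, so $|P|\leq m(G,H)$ by \eqref{eq:defmGH}. The bound becomes $|A-B|\geq 2q-m(G,H)$, as required.

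If $P\cap H\neq\{0\}$, then $|P\cap H|\geq 2$. Moreover, the image of $P$ in $G/H$ has order $|P|/|P\cap H|\leq q$, so $|P|\leq q\,|P\cap H|$. Therefore
\[
  |A-B|\geq q\,|P\cap H|\geq 2q\geq 2q-m(G,H).
\]

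The only step requiring care is the coset count in Step 1. The natural first guess, that $X+P$ contains about $|P|$ elements per $H$-coset, is false. One must instead work with cosets of $P+H$ and accept the weaker bound $q\,|P\cap H|$. The case split shows that this weaker bound is still enough, because the defect $|P|$ in Kneser's inequality is controlled by $m(G,H)$ exactly when $P\cap H$ is trivial, and is otherwise dominated by the factor $|P\cap H|\geq 2$.
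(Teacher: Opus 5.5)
Your proof is correct and follows essentially the same route as the paper: Kneser applied to $A+(-B)$ with $P=\operatorname{Stab}(A-B)$, then the bound $|X+P|\geq q\,|P\cap H|$, then a split according to whether $P\cap H$ is trivial. The only difference is in Step 1. You count one full $P$-coset in each $(P+H)$-coset, whereas the paper gets the same bound more directly by noting that, for each $a\in A$, the set $a+(P\cap H)\subseteq A+P$ already lies in the $H$-coset of $a$.
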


\begin{proof}
Let $P=\operatorname{Stab}(A-B)$, and let $\pi:G\to G/H$ be the quotient map.
Denote by
$s=|P\cap H|$,
and by
  $u=|\pi(P)|$.
The exact sequence
\[
  0\longrightarrow P\cap H\longrightarrow P\longrightarrow \pi(P) \longrightarrow 0
\]
gives
  $|P|=su$.
By Kneser's theorem, applied to $A+(-B)$,
\[
  |A-B|\geq |A+P|+|B+P|-|P|.
\]
Since $A$ contains one element in each $H$-coset, the set $A+P$ contains, in
each $H$-coset, at least one coset of $P\cap H$.  Hence
\[
  |A+P|\geq sq.
\]
The same argument gives $|B+P|\geq sq$ and therefore
  $|A-B|\geq s(2q-u)$.

If $s=1$, then $P\cap H=\{0\}$, so $P$ is a subgroup of $G$ disjoint from $H$.
Thus
\[
  u=|P|\leq m(G,H),
\]
and hence
\[
  |A-B|\geq 2q-u\geq 2q-m(G,H).
\]
If $s>1$, then $u\leq q$, and so
\[
  |A-B|\geq s(2q-u)\geq sq\geq 2q.
\]
Since $m(G,H)\geq 1$, this implies
\[
  |A-B|\geq 2q\geq 2q-m(G,H)
\]
and the theorem follows.
\end{proof}

Applying the theorem to pairs of slices in the chain $H\leq K\leq G$ replaces
the elementary contribution $|K/H|$ in every nonzero $G/K$-fibre by
$2|K/H|-m(K,H)$.
\begin{corollary}
\label{cor:kneser-sharpened-chain-bound}
Let $H\leq K\leq G$, and put
\[
  \begin{aligned}
    d_0&=\delta(K,H),
    & d_1&=\delta(G,K),
    r&=|K/H|,
    & q&=|G/K|,
    & m_0&=m(K,H).
  \end{aligned}
\]
Then
\[
  \delta(G,H)
  \geq
  \max\{d_0+d_1-1,\ d_0+(q-1)(2r-m_0)\}.
\]
\end{corollary}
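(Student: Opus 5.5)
The first term $d_0+d_1-1$ is already given by Proposition~\ref{prop:chain-inequalities}, so only the second term $d_0+(q-1)(2r-m_0)$ needs proof. For it I would rerun the slicing argument of that proposition, but replace the crude count in each nonzero $G/K$-direction by Theorem~\ref{thm:kneser-cross-transversal}, applied inside a single coset-translate of $K$.

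Fix a transversal $T$ for $G/H$ and let $\rho=\pi_{G,K}:G\to G/K$. For $y\in G/K$ put $T_y=T\cap\rho^{-1}(y)$. Each $T_y$ is a transversal for the $H$-cosets contained in the $K$-coset $\rho^{-1}(y)$. Fix a base point $g_y\in\rho^{-1}(y)$. Then $T_y-g_y\subseteq K$ is a transversal for $K/H$. The difference support decomposes over $G/K$-directions:
\[
  D(T)\cap\rho^{-1}(b)=\bigcup_{y\in G/K}\bigl(T_{y+b}-T_y\bigr).
\]
The $q$ fibres are disjoint, so it suffices to bound each one from below.

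In the zero direction, take any $y$. Then $T_y-T_y=D(T_y-g_y)$ is the difference set of a transversal for $K/H$, so the zero fibre has at least $d_0$ elements.

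In a nonzero direction $b$, pick any $y$ and write
\[
  T_{y+b}-T_y=(T_{y+b}-g_{y+b})-(T_y-g_y)+(g_{y+b}-g_y).
\]
This is a translate of $A-B$, where $A=T_{y+b}-g_{y+b}$ and $B=T_y-g_y$ are both transversals for $K/H$ inside $K$. Theorem~\ref{thm:kneser-cross-transversal}, applied to the pair $(K,H)$, gives $|A-B|\ge 2r-m_0$. Translation preserves cardinality, so the fibre over $b$ has at least $2r-m_0$ elements.

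Summing over the zero fibre and the $q-1$ nonzero fibres gives $|D(T)|\ge d_0+(q-1)(2r-m_0)$. Taking the minimum over $T$, and the maximum with the first term, proves the corollary.

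The only step needing care is the translation: one must check that the translated slices genuinely lie in $K$ and are transversals for $K/H$. This holds because $H\le K$. Any two elements of $T_y$ differ by an element of $K$, and they lie in distinct $H$-cosets. Moreover, every $H$-coset inside $\rho^{-1}(y)$ is hit exactly once, because $T$ is a transversal for $G/H$.
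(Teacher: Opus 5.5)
Your proposal is correct and matches the paper's proof: take the first term from Proposition~\ref{prop:chain-inequalities}, slice $T$ over the cosets of $K$, get $d_0$ from the zero fibre via one translated slice, and get $2r-m_0$ in each of the $q-1$ nonzero fibres by applying Theorem~\ref{thm:kneser-cross-transversal} inside $K$ to two translated slices. Your explicit check that the translated slices are transversals for $K/H$ fills in a detail the paper only asserts.
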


\begin{proof}
The bound $d_0+d_1-1$ is Proposition~\ref{prop:chain-inequalities}.  Let $T$ be
a transversal for $G/H$ and slice it above the cosets of $K$.  The zero
$G/K$-fibre contains the difference set of one translated slice and contributes
at least $d_0$ elements.

For a nonzero direction in $G/K$, the corresponding fibre contains a translated
cross-difference between two slices.  After translation into $K$, these are
transversals for $K/H$, so Theorem~\ref{thm:kneser-cross-transversal} gives at
least
\[
  2r-m_0
\]
elements.  Summing over the $q-1$ nonzero directions in $G/K$ gives
\[
  |D(T)|\geq d_0+(q-1)(2r-m_0).
\]
Taking the minimum over $T$ proves the claim.
\end{proof}

\begin{remark}
Passing to quotients contained in $H$ can only identify differences.  If
$N\leq H\leq G$, then
\[
  \delta(G/N,H/N)\leq \delta(G,H).
\]
Indeed, the image of a transversal for \(G/H\) under the quotient map
\(G\to G/N\) is a transversal for \((G/N)/(H/N)\), and quotienting can only
identify differences.

Proposition~\ref{prop:primitive-quotient-reduction} gives a more precise form of
quotienting for the particular subgroup $\widetilde\Sigma(T)$ attached to a
transversal $T$: in that case the image transversal is primitive and the
difference support satisfies the exact multiplicative identity
\[
  |D(T)|=|\widetilde\Sigma(T)|\,|D(\overline T)|.
\]
\end{remark}

Applying the chain estimates to the two natural coordinate chains gives the
following elementary product bounds.
Let $H_i\leq G_i$, put
\[
  q_i=|G_i/H_i|,
  \qquad d_i=\delta(G_i,H_i)
  \qquad (i=1,2).
\]
Then
\begin{equation} \label{eq:bound-dir-products}
\max\{d_1+d_2-1,\ d_1+(q_2-1)q_1,d_2+(q_1-1)q_2\}\leq  \delta(G_1\times G_2,H_1\times H_2)\leq d_1d_2.
\end{equation}
The product of optimal transversals gives the upper bound: if
$T_i\subseteq G_i$ is a transversal for $G_i/H_i$ with $|D(T_i)|=d_i$, then
$T_1\times T_2$ is a transversal for
\[
  (G_1\times G_2)/(H_1\times H_2),
\]
and
\[
  D(T_1\times T_2)=D(T_1)\times D(T_2).
\]
Thus the difference support has size $d_1d_2$.

For the lower bounds, apply Proposition~\ref{prop:chain-inequalities} to the
chain
\[
  H_1\times H_2\leq G_1\times H_2\leq G_1\times G_2
\]
and then to the symmetric chain
\[
  H_1\times H_2\leq H_1\times G_2\leq G_1\times G_2.
\]
The identities
\[
  \delta(G_1\times H_2,H_1\times H_2)=\delta(G_1,H_1),
  \qquad
  \delta(G_1\times G_2,G_1\times H_2)=\delta(G_2,H_2)
\]
follow from projection and the fixed-coordinate construction.  Substituting
them in the two chains gives the displayed estimates.

The same sharpening applies to products through the two coordinate chains.
With \(m_i=m(G_i,H_i)\), Corollary~\ref{cor:kneser-sharpened-chain-bound}
gives
\[
\begin{split}
  \delta(G_1\times G_2,H_1\times H_2)
  \geq \max\{&d_1+d_2-1,\ d_1+(q_2-1)(2q_1-m_1),\\
              &d_2+(q_1-1)(2q_2-m_2)\}.
\end{split}
\]

\begin{remark}
Since $m_i\leq q_i$, the Kneser-sharpened product terms dominate the
corresponding elementary product terms above.  The improvement is strict in the term where the factor providing
$m_i$ is nonsplit, meaning $m_i<q_i$, and the other quotient is nontrivial.
\end{remark}

The Cartesian-product construction need not be optimal.
\begin{example}[Products need not be multiplicative]
\label{ex:products-not-multiplicative}
The cyclic formula gives
\[
  \delta(C_4,2C_4)=3,
  \qquad
  \delta(C_9,3C_9)=5.
\]
However
\[
  (C_4\times C_9,\ 2C_4\times 3C_9)
  \simeq
  (C_{36},\ 6C_{36}).
\]
By Corollary~\ref{cor:cyclic-fixed-stage-formula},
\[
  \delta(C_{36},6C_{36})=11.
\]
Thus the optimal value for the product pair is $11$, not $15$.
\end{example}

The cross-transversal estimate is exact after adjoining a split factor, provided
the base pair already attains the lower bound.
\begin{corollary}
\label{cor:adjoining-split-factor}
Let $H_0\leq G_0$, let $L$ be a finite abelian group, for which we write
$q_0=|G_0/H_0|$,
and
  $m_0=m(G_0,H_0)$.
Then
\[
  \delta(G_0\times L,H_0\times\{0\})
  \geq
  |L|(2q_0-m_0).
\]
If $\delta(G_0,H_0)=2q_0-m_0$,
the inequality above holds with equality.
\end{corollary}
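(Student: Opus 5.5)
We prove the lower bound by slicing along the chain
\[
  H_0\times\{0\}\;\leq\; H_0\times L\;\leq\; G_0\times L,
\]
and then obtain the upper bound from a product construction.

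For the lower bound, let $T$ be a transversal for $(G_0\times L)/(H_0\times\{0\})$, and put $K=G_0\times\{0\}$. The subgroup $K$ contains $H_0\times\{0\}$, and $(G_0\times L)/K\simeq L$. For each $\ell\in L$ the slice $T_\ell=T\cap(G_0\times\{\ell\})$ is nonempty. After translating by $(0,-\ell)$, it becomes a transversal $A_\ell\subseteq G_0$ for $G_0/H_0$, identifying $K$ with $G_0$. The fibre of $D(T)$ over a direction $\lambda\in L$ is a union of sets of the form $(T_{\ell+\lambda}-T_\ell)$. Fixing one $\ell$, this fibre contains a translate of $A_{\ell+\lambda}-A_\ell$ placed in $G_0\times\{\lambda\}$.

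By Theorem~\ref{thm:kneser-cross-transversal}, applied in $G_0$ to the pair of transversals $A_{\ell+\lambda},A_\ell$, we get
\[
  |A_{\ell+\lambda}-A_\ell|\geq 2q_0-m_0 .
\]
This also covers $\lambda=0$, where the set is $A_\ell-A_\ell$. The fibres over distinct $\lambda\in L$ are disjoint. Summing over the $|L|$ directions gives
\[
  |D(T)|\geq |L|(2q_0-m_0).
\]
Minimising over $T$ yields the inequality. This is exactly Corollary~\ref{cor:kneser-sharpened-chain-bound} with $r=q_0$ and $q=|L|$, except that the zero fibre is also estimated by Kneser rather than by $d_0$.

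For the equality case, take $T_0\subseteq G_0$ with $|D(T_0)|=2q_0-m_0$ and set $T=T_0\times L$. This is a transversal: $L$ is a subgroup complement to $\{0\}$ in $L$, so it is a transversal of $L/\{0\}$. Moreover $D(T)=D(T_0)\times L$, so
\[
  |D(T)|=|L|(2q_0-m_0).
\]
Equivalently, this is the upper bound in \eqref{eq:bound-dir-products} with $d_2=\delta(L,0)=|L|$, by Proposition~\ref{prop:complement-criterion}. Together with the lower bound, this gives equality.

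The only delicate point is the identification of the translated slices with transversals of $G_0/H_0$. This holds because the cosets of $H_0\times\{0\}$ inside $G_0\times\{\ell\}$ correspond bijectively to $G_0/H_0$. I expect no real obstacle here; the rest is routine bookkeeping of the fibre decomposition over $L$.
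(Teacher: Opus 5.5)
Your proposal is correct and follows the paper's proof essentially verbatim: you slice $T$ over $L$ into transversals $A_\ell$ of $G_0/H_0$, apply Theorem~\ref{thm:kneser-cross-transversal} to one cross-difference in each of the $|L|$ disjoint $L$-fibres (the zero fibre included), and use $T_0\times L$ for the matching upper bound. One cosmetic slip: the chain you display at the start should be $H_0\times\{0\}\leq G_0\times\{0\}\leq G_0\times L$, since that is the chain your argument actually slices along via $K=G_0\times\{0\}$.
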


\begin{proof}
Let $T$ be a transversal for
\[
  (G_0\times L)/(H_0\times\{0\}).
\]
For each $\ell\in L$, define
\[
  A_\ell=\{g\in G_0:(g,\ell)\in T\}.
\]
Because quotient cosets are indexed by $(G_0/H_0)\times L$, each $A_\ell$ is a
transversal for $G_0/H_0$.

For each $\ell\in L$, the $L$-coordinate $\ell$ fibre of $D(T)$ contains
\[
  (A_\ell-A_0)\times\{\ell\}.
\]
By Theorem~\ref{thm:kneser-cross-transversal},
\[
  |A_\ell-A_0|\geq 2q_0-m_0.
\]
These fibres are disjoint for different $\ell$, so
\[
  |D(T)|\geq |L|(2q_0-m_0).
\]
Taking the minimum over $T$ proves the lower bound.

For the upper bound under the displayed hypothesis, choose a transversal
$T_0\subseteq G_0$ for $G_0/H_0$ with
\[
  |D(T_0)|=2q_0-m_0.
\]
Then $T_0\times L$ is a transversal for
$(G_0\times L)/(H_0\times\{0\})$, and
\[
  D(T_0\times L)=D(T_0)\times L.
\]
Thus
\[
  |D(T_0\times L)|=|L|\,|D(T_0)|=|L|(2q_0-m_0),
\]
which proves equality.
\end{proof}

Together with the cyclic quotient formula, this gives the main exact product
family of the section.
\begin{corollary}
\label{cor:one-nonsplit-cyclic-coordinate}
Let $p$ be prime, let $1\leq b<a$, and let $L$ be a finite abelian group.  Then
\[
  \delta(C_{p^a}\times L,\ p^b C_{p^a}\times\{0\})
  =
  |L|(2p^b-1).
\]
\end{corollary}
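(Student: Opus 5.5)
The plan is to reduce this to Corollary~\ref{cor:adjoining-split-factor}, applied with $G_0=C_{p^a}$ and $H_0=p^bC_{p^a}$. That corollary gives the lower bound $|L|(2q_0-m_0)$ for free, and upgrades it to an equality once we know $\delta(G_0,H_0)=2q_0-m_0$. So what remains is to compute $q_0$ and $m_0$ and to check that the base pair attains the cyclic lower bound.

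First, $H_0=p^bC_{p^a}$ is the subgroup of order $p^{a-b}$. Hence $q_0=|G_0/H_0|=p^b$, and the quotient is cyclic.

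Next, I compute $m_0=m(G_0,H_0)$. Every nontrivial subgroup of $C_{p^a}$ contains the unique subgroup of order $p$. Since $a-b\geq 1$, the subgroup $H_0$ is nontrivial and also contains that order-$p$ subgroup. So every nontrivial subgroup meets $H_0$ nontrivially, and $m_0=1$. Equivalently, Corollary~\ref{cor:cyclic-fixed-stage-formula} applies with $h=p^{a-b}$ and $q=p^b$: the largest divisor of $p^b$ coprime to $p^{a-b}$ is $1$. Here the hypothesis $b<a$ is exactly what makes $h>1$.

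Then Theorem~\ref{thm:cyclic-quotient-formula}, or that corollary directly, gives $\delta(G_0,H_0)=2p^b-1=2q_0-m_0$. This is precisely the hypothesis needed for the equality case of Corollary~\ref{cor:adjoining-split-factor}.

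Finally, Corollary~\ref{cor:adjoining-split-factor} yields $\delta(C_{p^a}\times L,\ p^bC_{p^a}\times\{0\})=|L|(2p^b-1)$. For concreteness, the optimal transversal is $\{0,g,\dots,(p^b-1)g\}\times L$ with $g$ a generator of $C_{p^a}$.

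There is no real obstacle here. The only point needing care is $m_0=1$, which uses $b<a$: if $b=a$, then $H_0$ would be trivial and the pair would split instead.
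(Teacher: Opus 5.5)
Your proposal is correct and follows essentially the same route as the paper. The paper's proof likewise shows $m(C_{p^a},p^bC_{p^a})=1$ using that $H_0\neq 0$ because $b<a$, then gets $\delta(G_0,H_0)=2p^b-1$ from the cyclic quotient formula, and concludes with the equality case of Corollary~\ref{cor:adjoining-split-factor}.
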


\begin{proof}
For the base pair
\[
  G_0=C_{p^a},
  \qquad
  H_0=p^bC_{p^a},
\]
the quotient has size $p^b$.  Since $1\leq b<a$, the subgroup $H_0$ is nonzero,
and every nontrivial subgroup of the cyclic $p$-group $C_{p^a}$ meets $H_0$
nontrivially.  Therefore
\[
  m(G_0,H_0)=1.
\]
The cyclic quotient formula, Theorem~\ref{thm:cyclic-quotient-formula}, gives
\[
  \delta(G_0,H_0)=2p^b-1=2|G_0/H_0|-m(G_0,H_0).
\]
Corollary~\ref{cor:adjoining-split-factor} now gives the desired equality after
adjoining the split factor $L$.
\end{proof}

\begin{remark}
Corollary~\ref{cor:one-nonsplit-cyclic-coordinate} is not a product
multiplicativity statement: the added factor is split, with subgroup
$p^bC_{p^a}\times\{0\}$.  It does not cover a second nonsplit $p$-primary
coordinate, which is why
\(((\mathbb Z/p^2\mathbb Z)^2,\ p(\mathbb Z/p^2\mathbb Z)^2)\) is the first
genuinely new same-prime case.
\end{remark}

%% file: section5-noncyclic-obstructions.tex
\section{Noncyclic residual obstructions}
\label{sec:noncyclic-obstructions}

The cyclic quotient formula shows that the fibre lower bound is sharp for every
cyclic quotient.  Noncyclic residual quotients behave differently: the first
obstruction is already visible in a quotient plane of order four, and it
separates the cyclic theory from the higher-rank phenomena studied later.

The two-torsion obstruction gives a strict improvement whenever the quotient
contains a two-dimensional \(2\)-torsion plane.
\begin{theorem}
\label{thm:residual-two-torsion-obstruction}
Let $H\leq G$ be residual, so that $m(G,H)=1$.  Suppose that $G/H$ contains a
subgroup isomorphic to $C_2\times C_2$.  Then every transversal $T$ for $G/H$
satisfies
\[
  |D(T)|\geq 2|G/H|+1.
\]
In particular, the residual fibre lower bound $2|G/H|-1$ is not sharp for such a
pair.
\end{theorem}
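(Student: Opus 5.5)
The plan is to combine the fibre decomposition of Section~\ref{sec:preliminaries} with the extra symmetry of difference fibres in directions of order two. Let $T$ be a normalized transversal with section $s$, and put $q=|G/H|$. Since the pair is residual, Proposition~\ref{prop:primitive-quotient-reduction}(2) gives $|\Sigma(T)|=|\widetilde\Sigma(T)|\le m(G,H)=1$. So $T$ is primitive, every nonzero fibre has size at least $2$, and $|D(T)|\ge 2q-1$. We argue by contradiction and assume $|D(T)|\le 2q$. Then all nonzero fibres have size exactly $2$, except for at most one fibre of size $3$.

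\textbf{Step 1 (parity of involution fibres).} Let $a\in G/H$ satisfy $2a=0$ and $a\neq 0$. Then $\mathcal D_a(T)=-\mathcal D_a(T)$, because $x\mapsto x+a$ swaps $s(x+a)-s(x)$ with its negative. Suppose some $e\in\mathcal D_a(T)$ had $2e=0$. Since $\pi(e)=a\neq 0$, the subgroup $\langle e\rangle$ would have order $2$ and meet $H$ trivially, which contradicts residuality. Hence negation acts on $\mathcal D_a(T)$ without fixed points, and $|\mathcal D_a(T)|$ is even. It follows that no involution fibre has size $3$. Under our assumption, each of the three involution directions $a$, $b$, $c=a+b$ of the given $C_2\times C_2$ plane therefore has a fibre of the form $\{\pm d\}$, $\{\pm e\}$, $\{\pm f\}$ respectively. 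Here $d$, $e$, $f$ all have order greater than $2$.

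\textbf{Step 2 (local square relations).} Fix $x$ and write
\[
s(x+a)-s(x)=\alpha d,\qquad s(x+a+b)-s(x+a)=\beta e,
\]
\[
s(x+a+b)-s(x+b)=\gamma d,\qquad s(x+b)-s(x)=\eta e,
\]
with signs in $\{\pm1\}$. Closing the square gives $(\alpha-\gamma)d=(\eta-\beta)e$. The two diagonals give
\[
\alpha d+\beta e\in\{\pm f\},\qquad \eta e-\alpha d=s(x+b)-s(x+a)\in\{\pm f\}.
\]
\emph{Case 1: $\alpha=\gamma$ and $\beta=\eta$.} Then $\alpha d+\beta e=\pm(\beta e-\alpha d)$, which forces $2d=0$ or $2e=0$, a contradiction.

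\emph{Case 2: one sign pair differs.} The relation $2d\neq0$ forces both pairs to differ, and then $2d=\pm 2e$. Substituting this into the two diagonal memberships should force $2f=0$, or else one of $2d$, $2e$ to vanish. Each of these contradicts Step 1.

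\textbf{Main obstacle.} The step I expect to be hardest is making Case 2 airtight. There the square relation only yields $2d=\pm2e$, and the diagonal constraints must be used carefully; elements of order $4$ make $d=\pm e+t$ with $2t=0$ possible. If a direct sign chase does not close the case, I would instead average over the whole $\langle a,b\rangle$-orbit of $x$, using the involutions $x\mapsto x+a$ and $x\mapsto x+b$. This should show that the sign functions $\varepsilon_a(x)$ and $\varepsilon_b(x)$ are forced into a pattern incompatible with $\varepsilon_a(x+a)=-\varepsilon_a(x)$.

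Once the contradiction is reached, at least two involution fibres have size at least $4$. Hence $|D(T)|\ge 2q-1+2=2q+1$.
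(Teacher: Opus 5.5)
\textbf{Verdict.} Your approach is correct and is essentially the paper's. Residuality makes every nonzero fibre have size at least $2$. Involution fibres are negation-stable without fixed points, hence even. A sign chase on the square $x,\,x+a,\,x+b,\,x+a+b$ then rules out all three involution fibres having size $2$. The paper runs the chase at the normalized base point $x=0$: it writes the fibres as $\{\pm s(a)\}$, $\{\pm s(b)\}$, $\{\pm s(c)\}$ and uses $C-B=\pm A$, $C-A=\pm B$, $B-A=\pm C$. Your version at an arbitrary $x$, with abstract fibre elements $d,e,f$, is equivalent. As written, though, you leave Case 2 open and propose a vague fallback.

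\textbf{Closing Case 2.} The step you flag as the main obstacle is not one. The difficulty only appears because you drop the sign when you write ``$2d=\pm 2e$''.
\begin{itemize}
\item The two mixed subcases give $2d=0$ or $2e=0$ respectively, so you need both $2d\neq 0$ and $2e\neq 0$ there, not just $2d\neq 0$.
\item What remains is $\gamma=-\alpha$ and $\eta=-\beta$. Then the square-closing relation $(\alpha-\gamma)d=(\eta-\beta)e$ reads $2\alpha d=-2\beta e$, that is, $2(\alpha d+\beta e)=0$.
\item But $\alpha d+\beta e=s(x+a+b)-s(x)\in\mathcal D_c(T)=\{\pm f\}$. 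Hence $2f=0$, which contradicts Step 1.
\end{itemize}
Nothing about $d\mp e$ or elements of order $4$ enters, and no averaging over the $\langle a,b\rangle$-orbit is needed. In this case the second diagonal is just $-(\alpha d+\beta e)$, so it adds no information.

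\textbf{Final sentence.} It should say ``at least one'', not ``at least two''. Take $G=(\mathbb Z/4\mathbb Z)^2$, $H=2G$ and $T=\{0,1\}^2$. Only the fibre over $(1,1)$ has size $4$, and $|D(T)|=9=2q+1$. Your own arithmetic $2q-1+2$ corresponds to one enlarged fibre. In your contradiction framing the sentence is unnecessary anyway, since refuting $|D(T)|\le 2q$ already gives $|D(T)|\ge 2q+1$.
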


\begin{proof}
Let $Q=G/H$, let $q=|Q|$, and normalize $T$, with associated section
$s:Q\to G$.  Since the pair is residual, Proposition~\ref{prop:primitive-quotient-reduction}
implies that no nonzero quotient direction can be singleton.  Therefore
\[
  |\mathcal D_a(T)|\geq 2
  \qquad(a\in Q,\ a\neq 0).
\]

For directions of order two there is a stronger parity fact.  If $a\in Q$ has
order two, then $a=-a$, hence
\[
  \mathcal D_a(T)=-\mathcal D_a(T).
\]
No element of this fibre can be fixed by negation: if $d=-d$, then $2d=0$ and
$\pi(d)=a\neq 0$, so $\langle d\rangle$ is a nontrivial subgroup disjoint from
$H$.  Thus every order-two direction has an even fibre of size at least $2$.

Choose a subgroup
\[
  P=\{0,a,b,c\}\leq Q,
  \qquad c=a+b,
\]
with $P\simeq C_2\times C_2$.  We claim that the three nonzero fibres over
$a,b,c$ cannot all have size $2$.  Suppose, to the contrary, that they do.  Put
\[
  A=s(a),\qquad B=s(b),\qquad C=s(c).
\]
Since each of the three fibres is stable under negation and has two elements,
we have
\[
  C-B=\varepsilon A,
  \qquad
  C-A=\eta B,
  \qquad
  B-A=\theta C
\]
for some signs $\varepsilon,\eta,\theta\in\{\pm1\}$.  The first two identities give
\[
  B+\varepsilon A=A+\eta B.
\]
If $\varepsilon=1$ and $\eta=-1$, this gives $2B=0$.  If $\varepsilon=-1$ and
$\eta=1$, it gives $2A=0$.  If $\varepsilon=-1$ and $\eta=-1$, it gives
$2(A-B)=0$.  Finally, if $\varepsilon=\eta=1$, then $C=A+B$; using
$B-A=\theta C$ gives either $2A=0$ or $2B=0$.  In all cases, one of $A$, $B$,
or $A-B$ has order two.  Its image in $Q$ is respectively $a$, $b$, or $a+b$,
hence nonzero.  The subgroup it generates is therefore nontrivial and disjoint
from $H$, again contradicting residuality.  Thus at least one of the three
fibres over $a,b,c$ has size at least $4$.

Counting quotient-direction fibres now gives
\[
\begin{aligned}
  |D(T)|
  &=\sum_{u\in Q}|\mathcal D_u(T)| \\
  &\geq 1+\bigl(2+2+4\bigr)+2(q-4) \\
  &=2q+1.
\end{aligned}
\]
This proves the theorem.
\end{proof}

For square-modulus \(2\)-groups this gives an exact value in rank two. The follwing corollary is immediate.
\begin{corollary}
\label{cor:two-torsion-square-modulus}
Let \(r\geq 2\), let
\[
  G=(\mathbb Z/4\mathbb Z)^r,
  \qquad H=2G.
\]
Then
\[
  \delta(G,H)\geq 2^{r+1}+1.
\]
For \(r=2\), the equality $\delta(G,H)=9$ is realized by the transversal $T=\{0,1 \}^2$.
\end{corollary}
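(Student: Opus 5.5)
The corollary has two parts: a lower bound that follows from Theorem~\ref{thm:residual-two-torsion-obstruction}, and, for $r=2$, a direct computation showing the bound is attained.

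\textbf{Lower bound.} It suffices to check the two hypotheses of the theorem for $G=(\mathbb Z/4\mathbb Z)^r$ and $H=2G$.

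\emph{Residuality.} Let $K\leq G$ be a nontrivial subgroup. Then $K$ contains an element $k$ of order $2$. Every element of order $2$ in $(\mathbb Z/4\mathbb Z)^r$ has all coordinates in $\{0,2\}$, so $k\in 2G=H$. Hence $K\cap H\neq\{0\}$, and therefore $m(G,H)=1$.

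\emph{Quotient.} We have $G/H\simeq(\mathbb Z/2\mathbb Z)^r$. Since $r\geq 2$, this contains a copy of $C_2\times C_2$.

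The theorem now gives $|D(T)|\geq 2\cdot 2^r+1=2^{r+1}+1$ for every transversal $T$, hence $\delta(G,H)\geq 2^{r+1}+1$.

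\textbf{Equality for $r=2$.} The lower bound is $9$, so we need a transversal with $|D(T)|=9$. Take $T=\{0,1\}^2\subseteq(\mathbb Z/4\mathbb Z)^2$.

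\emph{$T$ is a transversal.} Its four elements reduce modulo $2$ to the four distinct classes of $(\mathbb Z/2\mathbb Z)^2$, so $T$ meets each coset of $H$ exactly once.

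\emph{Counting $D(T)$.} Because $T$ is a product set,
\[
  D(T)=\{0,1\}^2-\{0,1\}^2=\{-1,0,1\}^2 .
\]
The residues $-1,0,1$ are distinct modulo $4$, so $|D(T)|=3^2=9$.

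This matches the lower bound, so $\delta(G,H)=9$ for $r=2$. None of the steps presents a real obstacle.
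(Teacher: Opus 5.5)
Your proof is correct and follows the same route as the paper, which calls the corollary immediate from Theorem~\ref{thm:residual-two-torsion-obstruction}. Your residuality check is essentially the argument of Proposition~\ref{prop:residual-square-modulus-pairs} with $p=2$, and your box $T=\{0,1\}^2$ with $D(T)=\{-1,0,1\}^2$ is the $p=2$, $r=2$ coordinate box that attains $9$.
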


The preceding obstruction is specific to quotient directions of order two.  For
odd primes there is no analogous negation parity argument.  The same
square-modulus construction is residual for every prime, and the coordinate box
gives the basic upper bound.
\begin{proposition}
\label{prop:residual-square-modulus-pairs}
Let $p$ be a prime and let
\[
  G=(\mathbb Z/p^2\mathbb Z)^r,
  \qquad
  H=pG.
\]
Then $m(G,H)=1$.  Moreover the coordinate box gives
\[
  \delta(G,H)\leq (2p-1)^r.
\]
\end{proposition}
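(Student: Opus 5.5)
The statement has two independent parts, and I would prove them in that order.

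\emph{Residuality.} Let $K\leq G$ be any subgroup with $K\cap H=\{0\}$. It suffices to show $K=\{0\}$. Suppose $K\neq 0$. Then $K$ is a nontrivial $p$-group, so it contains an element $x$ of order exactly $p$. Every element of order $p$ in $G=(\mathbb Z/p^2\mathbb Z)^r$ satisfies $px=0$. Coordinatewise, $p x_i\equiv 0 \pmod{p^2}$ forces $x_i\in p\,\mathbb Z/p^2\mathbb Z$. Hence $x\in pG=H$, which contradicts $K\cap H=\{0\}$. Equivalently, the socle $G[p]$ equals $pG$, and every nonzero subgroup of a finite $p$-group meets the socle nontrivially. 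This gives $m(G,H)=1$.

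\emph{Upper bound.} Take the coordinate box
\[
T=\{(t_1,\dots,t_r): t_i\in\{0,1,\dots,p-1\}\}\subseteq G,
\]
where the integers are read modulo $p^2$.

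First, $T$ is a transversal for $G/H$. The quotient $G/pG\cong(\mathbb Z/p\mathbb Z)^r$ is given coordinatewise by reduction mod $p$, and reduction mod $p$ maps $\{0,\dots,p-1\}$ bijectively onto $\mathbb Z/p\mathbb Z$ in each coordinate.

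Second, the difference support is
\[
D(T)=\{(d_1,\dots,d_r): d_i\in\{-(p-1),\dots,p-1\}\}
\]
taken modulo $p^2$. This set has at most $(2p-1)^r$ elements; in fact it has exactly that many, since the integers $-(p-1),\dots,p-1$ are distinct mod $p^2$ when $2p-2<p^2$. Alternatively, $D(T)=D(T_1)^r$ with $T_1=\{0,\dots,p-1\}\subseteq C_{p^2}$, which is the optimal transversal from the residual cyclic case of Theorem~\ref{thm:cyclic-quotient-formula} with $g=1$; then $|D(T)|=(2p-1)^r$ by the product identity $D(T_1\times T_2)=D(T_1)\times D(T_2)$ from Section~\ref{sec:chains-products-kneser}.

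Therefore $\delta(G,H)\leq |D(T)|=(2p-1)^r$.

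Neither step presents a real obstacle. The only point requiring care is the socle argument in the residuality part, namely that $G[p]=pG$ for this exponent-$p^2$ homocyclic group.
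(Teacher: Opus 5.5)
Your proposal is correct and follows essentially the same route as the paper. For residuality you use the element of order $p$ in a nonzero subgroup, i.e.\ the socle identity $G[p]=pG$; the paper instead splits into the order-$p$ and order-$p^2$ cases, which amounts to the same observation. For the upper bound you use the coordinate box $\{0,\ldots,p-1\}^r$ with difference support $\{-(p-1),\ldots,p-1\}^r$, as the paper does (the distinctness check and the product-identity remark are harmless but not needed for the inequality).
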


\begin{proof}
Let $0\neq g\in G$.  If $g$ has order $p$, then $g\in pG=H$.  If $g$ has order
$p^2$, then $pg$ is a nonzero element of $\langle g\rangle\cap H$.  Hence every
nontrivial subgroup of $G$ meets $H$ nontrivially, and therefore $m(G,H)=1$.

The coordinate box
\[
  B_p^r=\{0,1,\ldots,p-1\}^r\subseteq(\mathbb Z/p^2\mathbb Z)^r
\]
is a transversal for $G/H$.  Its difference support is
\[
  D(B_p^r)=\{-(p-1),\ldots,p-1\}^r,
\]
which has size $(2p-1)^r$.  This proves the upper bound.
\end{proof}

Thus, for odd $p$, the pair
\[
  \bigl((\mathbb Z/p^2\mathbb Z)^2,
        p(\mathbb Z/p^2\mathbb Z)^2\bigr)
\]
is the first same-prime residual case not covered by the cyclic formula, the
two-torsion obstruction, or the one-nonsplit-coordinate product theorem.  The
rest of the paper rewrites its transversals as graphs
\(\mathbb F_p^2\to\mathbb F_p^2\) and studies the resulting carry-corrected
finite-field difference images.

%% file: section6-odd-square-plane.tex
\section{The odd square-plane problem}
\label{sec:odd-square-plane}

We now specialize to the first unresolved same-prime residual case.  Throughout
this section $p$ is an odd prime,
\[
  G_p=(\mathbb Z/p^2\mathbb Z)^2,
  \qquad
  H_p=pG_p.
\]
By Proposition~\ref{prop:residual-square-modulus-pairs}, the pair is residual
and the coordinate box gives
\[
  \delta(G_p,H_p)\leq (2p-1)^2.
\]
We rewrite its transversals in finite-field terms and prove a uniform lower
bound towards the box value. The resulting corrected-derivative images are finite-field image sets;
these are adjacent to the sum-product/expander literature
\cites{BourgainKatzTao2004,MurphyPetridis2017} and, for graph/function
language to relative difference sets and planar functions works such as
\cite{PottSchmidtZhou2014}.

\subsection{Carry-corrected derivative images}
\label{subsec:carry-corrected-derivatives}

We identify the quotient $G_p/H_p$ with $\mathbb F_p^2$.  For
$x\in\mathbb F_p^2$, let $[x]\in\{0,1,\ldots,p-1\}^2$ denote the coordinatewise
standard lift.  Every normalized transversal for $G_p/H_p$ has a unique form
\[
  T_f=\{[x]+p[f(x)]:x\in\mathbb F_p^2\},
\]
where $f:\mathbb F_p^2\to\mathbb F_p^2$ satisfies $f(0)=0$.  This normalization
is harmless for difference supports: translating $T_f$ by an element of $pG_p$
subtracts a constant from $f$.

For $u,x\in\mathbb F_p^2$, define the carry vector
\[
  c_u(x)=\frac{[x+u]-[x]-[u]}{p}\in\mathbb F_p^2.
\]
Here the numerator is computed in $\mathbb Z^2$ using standard representatives;
its coordinates are either $0$ or $-p$, so $c_u(x)$ has coordinates $0$ or $-1$
modulo $p$.  We then define the carry-corrected derivative
\[
  d_u f(x)=f(x+u)-f(x)+c_u(x),
\]
and its image
\[
  A_u(f)=\{d_u f(x):x\in\mathbb F_p^2\}\subseteq\mathbb F_p^2.
\]

The finite-field parametrisation gives an exact fibre decomposition of the
difference support.
\begin{lemma}
\label{lem:square-plane-fibre-decomposition}
For every $f:\mathbb F_p^2\to\mathbb F_p^2$ one has
\[
  |T_f-T_f|=\sum_{u\in\mathbb F_p^2}|A_u(f)|.
\]
Moreover $A_0(f)=\{0\}$.
\end{lemma}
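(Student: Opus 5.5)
The plan is to use the fibre decomposition $D(T)=\bigsqcup_{u}\mathcal D_u(T)$ from Section~\ref{sec:preliminaries}, applied to $T=T_f$ with $Q=G_p/H_p\cong\mathbb F_p^2$. I will then show that each fibre $\mathcal D_u(T_f)$ is in bijection with $A_u(f)$. Here the section is $s(x)=[x]+p[f(x)]$. For $u,x\in\mathbb F_p^2$, I will compute the difference $s(x+u)-s(x)$ in $G_p=(\mathbb Z/p^2\mathbb Z)^2$.

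First I would write $[x+u]=[x]+[u]+p\,\tilde c$, where $\tilde c\in\{0,-1\}^2\subseteq\mathbb Z^2$ is the integer carry, whose reduction mod $p$ is $c_u(x)$. This gives
\[
  s(x+u)-s(x)=[u]+p\bigl([f(x+u)]-[f(x)]+\tilde c\bigr)\pmod{p^2}.
\]
Since $p\cdot(\text{integer vector})$ mod $p^2$ depends only on that vector mod $p$, this equals $[u]+p[d_uf(x)]$, where I reuse the standard lift $[\,\cdot\,]$ because $p[y]$ mod $p^2$ is determined by $y\in\mathbb F_p^2$. Hence
\[
  \mathcal D_u(T_f)=[u]+p[A_u(f)].
\]

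Next I note that the map $\mathbb F_p^2\to G_p$, $y\mapsto [u]+p[y]$, is injective: $p[y]\equiv p[y']\pmod{p^2}$ forces $y=y'$. Therefore $|\mathcal D_u(T_f)|=|A_u(f)|$, and summing over $u$ gives the stated formula.

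For $u=0$ the carry vanishes, since $[x+0]=[x]$, so $d_0f(x)=0$ for every $x$ and $A_0(f)=\{0\}$.

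The only delicate point is the bookkeeping between the integer carry in $\mathbb Z^2$ and its reduction $c_u(x)$ in $\mathbb F_p^2$, together with checking that the lifts of $f(x+u)-f(x)$ are handled consistently mod $p^2$. Once everything is multiplied by $p$, both are harmless. The normalization $f(0)=0$ plays no role here.
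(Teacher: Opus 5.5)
Your proposal is correct and follows essentially the same route as the paper: compute $s(x+u)-s(x)=[u]+p\,d_uf(x)$ in $(\mathbb Z/p^2\mathbb Z)^2$, identify the fibre over $u$ with $A_u(f)$, and sum over the disjoint quotient fibres. Your explicit handling of the integer carry versus its reduction mod $p$, and your injectivity check for $y\mapsto [u]+p[y]$, spell out what the paper leaves implicit when it says the possible $p$-parts are exactly $A_u(f)$.
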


\begin{proof}
The difference between the representative above $x+u$ and the representative
above $x$ is
\[
\begin{aligned}
  {[x+u]}+p[f(x+u)]-[x]-p[f(x)]
  &= [u]+p\bigl(f(x+u)-f(x)+c_u(x)\bigr) \\
  &= [u]+p\,d_u f(x)
\end{aligned}
\]
in $(\mathbb Z/p^2\mathbb Z)^2$.  In the quotient fibre
$u\in G_p/H_p\simeq\mathbb F_p^2$, the possible $p$-parts are exactly
$A_u(f)$.  The quotient fibres are disjoint, so summing over all $u$ gives the
formula.  For $u=0$ one has $c_0(x)=0$ and
$d_0f(x)=0$, hence $A_0(f)=\{0\}$.
\end{proof}

The coordinate box corresponds to $f=0$, and its support has size
$(2p-1)^2$.  This motivates the central conjecture.

\begin{conjecture}
\label{conj:odd-square-plane-box}
For every odd prime $p$,
\[
  \delta\bigl((\mathbb Z/p^2\mathbb Z)^2,
              p(\mathbb Z/p^2\mathbb Z)^2\bigr)=(2p-1)^2.
\]
Equivalently, for every function $f:\mathbb F_p^2\to\mathbb F_p^2$,
\[
  \sum_{u\in\mathbb F_p^2}|A_u(f)|\geq (2p-1)^2.
\]
\end{conjecture}

The general residual fibre lower bound gives only
\[
  |T_f-T_f|\geq 2p^2-1.
\]
Thus the conjecture asks for an additional
\[
  (2p-1)^2-(2p^2-1)=2(p-1)^2
\]
forced differences coming from the rank-two same-prime geometry.

\subsection{Cycle and curl identities}
\label{subsec:cycle-curl-identities}

The deterministic input is a pair of elementary identities using only that the
corrected derivatives come from a global quotient section.
\begin{lemma}
\label{lem:cycle-curl-two-point}
Let $p$ be an odd prime and let $f:\mathbb F_p^2\to\mathbb F_p^2$.  For all
$u,v,x\in\mathbb F_p^2$ one has the curl identity
\[
  d_u f(x+v)-d_u f(x)=d_v f(x+u)-d_v f(x).
\]
Moreover, for every nonzero $u\in\mathbb F_p^2$ and every affine $u$-line,
\[
  x+\langle u\rangle=\{x+iu:0\leq i\leq p-1\},
\]
one has the cycle identity
\[
  \sum_{i=0}^{p-1} d_u f(x+iu)=-u.
\]
Consequently:
\begin{enumerate}
  \item $|A_u(f)|\geq 2$ for every $u\neq 0$;
  \item if $|A_u(f)|=2$, then $A_u(f)$ is contained in an affine line parallel
  to $\langle u\rangle$.
\end{enumerate}
\end{lemma}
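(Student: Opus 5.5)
The plan is to derive both identities from the global section $s(x)=[x]+p[f(x)]$ in $G_p$, and then read off (1) and (2) from the cycle identity. By the computation in Lemma~\ref{lem:square-plane-fibre-decomposition}, for all $x,u$ we have
\[
  s(x+u)-s(x)=[u]+p\,d_uf(x)\quad\text{in } G_p .
\]
Since $p\,d_uf(x)$ depends only on $d_uf(x)$ modulo $p$, we may work with this relation directly.

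\textbf{Curl identity.} Expand $s(x+u+v)-s(x)$ in two ways:
\[
  \bigl(s(x+u+v)-s(x+v)\bigr)+\bigl(s(x+v)-s(x)\bigr)
  =\bigl(s(x+u+v)-s(x+u)\bigr)+\bigl(s(x+u)-s(x)\bigr).
\]
The left side equals $[u]+[v]+p\bigl(d_uf(x+v)+d_vf(x)\bigr)$, and the right side equals $[v]+[u]+p\bigl(d_vf(x+u)+d_uf(x)\bigr)$. Cancelling $[u]+[v]$ leaves an equality in $pG_p\simeq\mathbb F_p^2$, which is exactly the curl identity. Alternatively, the identity can be checked directly from $f$ and the carries, because the carry terms satisfy the same cocycle relation: $c_u(x+v)+c_v(x)=c_v(x+u)+c_u(x)$, both sides being $([x+u+v]-[x]-[u]-[v])/p$.

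\textbf{Cycle identity.} Telescope $s(x+iu+u)-s(x+iu)$ over $i=0,\dots,p-1$. Since $pu=0$ in $\mathbb F_p^2$, the sum vanishes in $G_p$, so
\[
  0=p[u]+p\sum_i d_uf(x+iu).
\]
Here $p[u]$ is the element $pu\in pG_p$ identified with $u\in\mathbb F_p^2$, which gives $\sum_i d_uf(x+iu)=-u$. As a check, at the level of carries $\sum_i c_u(x+iu)=([x+pu]-[x]-p[u])/p=-[u]$ modulo $p$, while $f$ telescopes to $0$.

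\textbf{Consequence (1).} If $A_u(f)=\{a\}$, then the cycle identity gives $pa=-u$, so $u=0$ in $\mathbb F_p$. Hence $|A_u(f)|\ge2$ for $u\ne0$. This matches residuality via Proposition~\ref{prop:primitive-quotient-reduction}.

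\textbf{Consequence (2).} Suppose $A_u(f)=\{a,b\}$ with $a\neq b$. On each $u$-line, let $k$ be the number of points where $d_uf=b$. The cycle identity gives $pa+k(b-a)=-u$, that is, $k(b-a)=-u$ with $0\le k\le p$. For $u\neq0$ this forces $k\not\equiv0 \pmod p$, so $b-a$ is a nonzero multiple of $u$. Therefore $A_u(f)\subseteq a+\langle u\rangle$.

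\textbf{Main obstacle.} The only subtle step is correctly identifying the order-$p$ element $p[u]$ with $u$ inside $pG_p$, and keeping the sign conventions of the carry straight. Everything else is telescoping.
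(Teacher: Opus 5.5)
Your proposal is correct and matches the paper's proof: the cycle identity comes from telescoping $[u]+p\,d_uf(x+iu)$ around the affine $u$-cycle and dividing by $p$, and consequences (1) and (2) come from the same value-counting on a single $u$-line. The only variation is in the curl identity, where your main argument compares two paths for $s(x+u+v)-s(x)$ while the paper expands $d_uf$ and checks the carry relation directly, which is exactly your stated alternative.
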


\begin{proof}
Expanding the definition of $d_u f$ gives
\[
\begin{aligned}
  d_u f(x+v)-d_u f(x)
  &=[f(x+u+v)-f(x+v)]-[f(x+u)-f(x)] \\
  &\quad +c_u(x+v)-c_u(x),
\end{aligned}
\]
and the analogous expression with $u$ and $v$ interchanged.  The $f$-terms are
identical.  The carry terms are also identical, since
\[
  c_u(x+v)-c_u(x)
  =\frac{[x+u+v]-[x+v]-[x+u]+[x]}{p}
  =c_v(x+u)-c_v(x).
\]
This proves the curl identity.

For the cycle identity, sum in $G_p$ the $p$ successive differences from the
representative above $x+iu$ to the representative above $x+(i+1)u$:
\[
  [u]+p\,d_u f(x+iu),
  \qquad 0\leq i\leq p-1.
\]
These differences go once around the affine cycle, so their sum is zero in
$G_p$:
\[
  p[u]+p\sum_{i=0}^{p-1}d_u f(x+iu)=0\pmod {p^2}.
\]
Dividing by $p$ modulo $p$ gives
\[
  u+\sum_{i=0}^{p-1}d_u f(x+iu)=0
\]
in $\mathbb F_p^2$.

If $A_u(f)=\{a\}$ for some $u\neq 0$, then the cycle identity gives
$pa=0=-u$, a contradiction.  Hence $|A_u(f)|\geq 2$ for every nonzero $u$.

Finally suppose $A_u(f)=\{a,b\}$.  On a fixed affine $u$-line, let $n$ be the
number of points at which $d_u f$ takes the value $a$.  Then the other $p-n$
points take the value $b$, and the cycle identity gives
\[
  na+(p-n)b=n(a-b)=-u.
\]
The cases $n=0$ and $n=p$ would make the left-hand side zero, so $n$ is nonzero
modulo $p$.  Thus
\[
  a-b=-n^{-1}u\in\langle u\rangle.
\]
Therefore $A_u(f)$ lies in an affine line parallel to $\langle u\rangle$.
\end{proof}

\subsection{A deterministic lower bound}
\label{subsec:deterministic-square-plane-bound}

Two independent quotient directions with minimal derivative images already force
the conjectural box lower bound.

\begin{lemma}
\label{lem:two-independent-two-point-directions}
Let $p$ be an odd prime and let $f:\mathbb F_p^2\to\mathbb F_p^2$.  Suppose
there are two linearly independent directions $u,v\in\mathbb F_p^2$ such that
\[
  |A_u(f)|=|A_v(f)|=2.
\]
Then
\[
  |T_f-T_f|\geq (2p-1)^2.
\]
\end{lemma}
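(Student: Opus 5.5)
The plan is to show that the hypothesis forces the same fibre profile as the coordinate box, namely
\[
  |A_w(f)|\geq 4 \qquad\text{for every } w\notin\langle u\rangle\cup\langle v\rangle .
\]
Lemma~\ref{lem:square-plane-fibre-decomposition} and Lemma~\ref{lem:cycle-curl-two-point}(1) then finish the count. The directions in $\langle u\rangle\cup\langle v\rangle$ other than $0$ number $2(p-1)$, and each contributes at least $2$. The remaining $(p^2-1)-2(p-1)=(p-1)^2$ directions each contribute at least $4$, and $A_0(f)=\{0\}$ contributes $1$. The total is
\[
  1+4(p-1)+4(p-1)^2=(2p-1)^2 .
\]

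\emph{Step 1 (rigidity from the curl identity).} By Lemma~\ref{lem:cycle-curl-two-point}(2) we can write $A_u(f)=\{a,a+\alpha u\}$ and $A_v(f)=\{b,b+\beta v\}$ with $\alpha,\beta\neq 0$. In the curl identity
\[
  d_uf(x+v)-d_uf(x)=d_vf(x+u)-d_vf(x),
\]
the left side lies in $\{0,\pm\alpha u\}$ and the right side lies in $\{0,\pm\beta v\}$. Since $u,v$ are independent, both sides vanish. So $d_uf$ is invariant under translation by $v$, and $d_vf$ is invariant under translation by $u$.

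Write $x=su+tv$. Then $d_uf(x)=a+\varepsilon(s)\alpha u$ and $d_vf(x)=b+\eta(t)\beta v$ for some functions $\varepsilon,\eta:\mathbb F_p\to\{0,1\}$. Apply the cycle identity on a $u$-line: the proof of Lemma~\ref{lem:cycle-curl-two-point} shows the number of points taking a given value is neither $0$ nor $p$. Hence $\varepsilon$ is nonconstant on $\mathbb F_p$, and likewise $\eta$ is nonconstant.

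\emph{Step 2 (telescoping a general direction).} Take $w=iu+jv$ with integer representatives $1\le i,j\le p-1$. Telescope in $G_p$ the section differences along the path
\[
  x\to x+u\to\cdots\to x+iu\to x+iu+v\to\cdots\to x+w .
\]
Each step contributes $[u]+p\,d_uf$ or $[v]+p\,d_vf$. The total $i[u]+j[v]$ differs from $[w]$ by $p$ times a constant vector independent of $x$. This gives
\[
  d_wf(x)=\kappa+N_1(s)\,\alpha u+N_2(t)\,\beta v,
  \qquad N_1(s)=\sum_{k=0}^{i-1}\varepsilon(s+k),\quad N_2(t)=\sum_{l=0}^{j-1}\eta(t+l),
\]
where $\kappa$ is independent of $x$. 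The point $x+iu+lv$ has $v$-coordinate $t+l$, so the $v$-steps depend only on $t$. This bookkeeping of the carry constant is the step I expect to need the most care. Nothing depends on its value, only on its independence from $x$.

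\emph{Step 3 (product structure).} Because $s$ and $t$ vary independently and $u,v$ are independent, $A_w(f)$ is in bijection with the product set
\[
  \{N_1(s)\bmod p\}\times\{N_2(t)\bmod p\}.
\]
It therefore suffices to show that each factor has at least two elements.

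The sum $N_1(s)$ is a cyclic window sum of length $i$ of $\varepsilon$. If it were constant in $s$, then $\varepsilon(s+i)=\varepsilon(s)$ for all $s$. Since $\gcd(i,p)=1$, this would make $\varepsilon$ constant, contradicting Step 1. So $N_1$ takes two distinct integer values in $[0,i]$. These remain distinct mod $p$ because $i<p$. The same argument applies to $N_2$.

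Hence $|A_w(f)|\ge 4$ for every such $w$, and the count above gives $|T_f-T_f|\ge(2p-1)^2$.
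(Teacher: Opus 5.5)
Your proposal is correct and follows the paper's proof essentially step for step: the curl identity forces $d_uf$ to be $v$-invariant and $d_vf$ to be $u$-invariant, and telescoping along a $u$-then-$v$ path turns $A_w(f)$, for mixed $w$, into a product of two images of size at least $2$. The one difference is how each factor is shown to have two values. You argue that the window sum $N_1$ is nonconstant via periodicity and $\gcd(i,p)=1$, whereas the paper observes that $\sum_{k<i}d_uf(x+ku)$ is a constant translate of $d_{iu}f(x)$, so its image has size $|A_{iu}(f)|\ge 2$ by Lemma~\ref{lem:cycle-curl-two-point}(1).
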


\begin{proof}
By Lemma~\ref{lem:cycle-curl-two-point}, the two images $A_u(f)$ and $A_v(f)$
are respectively contained in affine lines parallel to $\langle u\rangle$ and
$\langle v\rangle$.  Therefore
\[
  d_u f(x+v)-d_u f(x)\in\langle u\rangle,
  \qquad
  d_v f(x+u)-d_v f(x)\in\langle v\rangle.
\]
By the curl identity these two quantities are equal.  Since $u$ and $v$ are
independent, $\langle u\rangle\cap\langle v\rangle=\{0\}$, so both quantities
vanish.  Hence $d_u f$ is invariant in the $v$-direction and $d_v f$ is
invariant in the $u$-direction.

Every nonzero $w\in\mathbb F_p^2$ can be written uniquely as
\[
  w=\lambda u+\mu v,
  \qquad \lambda,\mu\in\mathbb F_p.
\]
If exactly one of $\lambda,\mu$ is nonzero, then
Lemma~\ref{lem:cycle-curl-two-point} gives $|A_w(f)|\geq 2$.

Assume now that $\lambda\mu\neq 0$, and choose representatives
$\lambda,\mu\in\{1,\ldots,p-1\}$.  Following first $\lambda$ steps in the
$u$-direction and then $\mu$ steps in the $v$-direction, define
\[
  B_\lambda(x)=\sum_{i=0}^{\lambda-1}d_u f(x+iu),
  \qquad
  C_\mu(x)=\sum_{j=0}^{\mu-1}d_v f(x+\lambda u+jv).
\]
The direct corrected derivative $d_w f(x)$ differs from
$B_\lambda(x)+C_\mu(x)$ by a constant depending only on $\lambda,\mu,u,v$ and on
our standard-lift carry convention, not on $x$.  This is just the telescoping of
the \(f\)-increments along the chosen path; the discrepancy between the path
carry and the standard carry for \(w\) is independent of the starting point.
Hence $A_w(f)$ contains a translate of the sum of the two images of $B_\lambda$
and $C_\mu$.
Since $d_u f$ is invariant in the $v$-direction, $B_\lambda$ depends only on the
$u$-coordinate of $x$ and its image lies in an affine line parallel to
$\langle u\rangle$.  Moreover $B_\lambda$ differs from the direct derivative
$d_{\lambda u}f$ by a constant, so
\[
  |\operatorname{im} B_\lambda|=|A_{\lambda u}(f)|\geq 2.
\]
Similarly, because $d_v f$ is invariant in the $u$-direction, $C_\mu$ depends
only on the $v$-coordinate, has image contained in an affine line parallel to
$\langle v\rangle$, and
\[
  |\operatorname{im} C_\mu|=|A_{\mu v}(f)|\geq 2.
\]
As the \(u\)- and \(v\)-coordinates of \(x\) vary independently, the pair
\((B_\lambda(x),C_\mu(x))\) runs over
\(\operatorname{im}B_\lambda\times\operatorname{im}C_\mu\).
Since the two image directions are parallel to the independent lines
$\langle u\rangle$ and $\langle v\rangle$, addition is injective on their
product.  Therefore
\[
  |A_w(f)|\geq
  |\operatorname{im} B_\lambda|\,|\operatorname{im} C_\mu|
  \geq 4
\]
for every mixed direction $w=\lambda u+\mu v$ with $\lambda\mu\neq 0$.

There are $p-1$ nonzero multiples of $u$, $p-1$ nonzero multiples of $v$, and
$(p-1)^2$ mixed directions.  Using Lemma~\ref{lem:square-plane-fibre-decomposition},
we get
\[
\begin{aligned}
  |T_f-T_f|
  &=1+\sum_{w\neq 0}|A_w(f)| \\
  &\geq 1+2(p-1)+2(p-1)+4(p-1)^2 \\
  &=(2p-1)^2.
\end{aligned}
\]
\end{proof}

This dichotomy gives an unconditional lower bound for every odd prime.
\begin{theorem}
\label{thm:unconditional-square-plane-bound}
For every odd prime $p$,
\[
  \delta\bigl((\mathbb Z/p^2\mathbb Z)^2,
              p(\mathbb Z/p^2\mathbb Z)^2\bigr)
  \geq 3p^2-p-1.
\]
Equivalently, every transversal $T$ for $G_p/H_p$ satisfies
\[
  |D(T)|\geq 3p^2-p-1.
\]
\end{theorem}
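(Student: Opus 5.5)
The plan is a dichotomy on the set of nonzero directions $u$ with $|A_u(f)|=2$. We use the fibre decomposition of Lemma~\ref{lem:square-plane-fibre-decomposition} together with Lemma~\ref{lem:two-independent-two-point-directions}. After normalizing, every transversal has the form $T_f$ for some $f:\mathbb F_p^2\to\mathbb F_p^2$ with $f(0)=0$. Translation does not change the difference support, so it suffices to bound $|T_f-T_f|=1+\sum_{u\neq 0}|A_u(f)|$ from below for every such $f$. By Lemma~\ref{lem:cycle-curl-two-point}(1), every nonzero direction satisfies $|A_u(f)|\geq 2$. Let
\[
  S(f)=\{u\in\mathbb F_p^2\setminus\{0\}:|A_u(f)|=2\}.
\]

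\emph{First case: $S(f)$ contains two linearly independent directions.} Then Lemma~\ref{lem:two-independent-two-point-directions} gives $|T_f-T_f|\geq(2p-1)^2$. It remains to compare this with the claimed bound:
\[
  (2p-1)^2-(3p^2-p-1)=p^2-3p+2=(p-1)(p-2)\geq 0
\]
for every odd prime $p$. So this case already gives the claim.

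\emph{Second case: any two elements of $S(f)$ are linearly dependent.} Then $S(f)$ lies in a single line $\langle u_0\rangle$ minus the origin, which has $p-1$ elements; $S(f)$ may also be empty. Every other nonzero direction, of which there are at least $(p^2-1)-(p-1)=p^2-p$, satisfies $|A_u(f)|\geq 3$. The worst case is when $S(f)$ is the whole punctured line. To see this, note that the lower bound
\[
  1+2|S(f)|+3\bigl(p^2-1-|S(f)|\bigr)
\]
decreases as $|S(f)|$ grows, so it is minimized at $|S(f)|=p-1$. This gives
\[
  |T_f-T_f|\geq 1+2(p-1)+3(p^2-p)=3p^2-p-1.
\]

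Combining the two cases proves the inequality for every transversal, hence for $\delta(G_p,H_p)$. There is no real obstacle here, since the substantive work sits in Lemma~\ref{lem:two-independent-two-point-directions} and the cycle identity. The only points needing care are the arithmetic check $(p-1)(p-2)\geq 0$ in the first case, and the observation that the size-$2$ directions in the second case fit inside one punctured line of $p-1$ elements.
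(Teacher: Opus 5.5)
Your proof is correct and matches the paper's argument: the same dichotomy on the set of two-point directions $S$, and the same use of Lemma~\ref{lem:two-independent-two-point-directions} together with the check $(2p-1)^2-(3p^2-p-1)=(p-1)(p-2)\geq 0$ when $S$ contains two independent directions. Otherwise, $|S|\leq p-1$ because $S$ lies in one punctured line, and you get the same count $1+2|S|+3(p^2-1-|S|)\geq 3p^2-p-1$.
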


\begin{proof}
Normalize the transversal and write it as $T=T_f$.  Let
\[
  S=\{u\in\mathbb F_p^2\setminus\{0\}: |A_u(f)|=2\}.
\]
If $S$ contains two independent directions, then
Lemma~\ref{lem:two-independent-two-point-directions} gives the stronger bound
\[
  |T_f-T_f|\geq (2p-1)^2.
\]
Since
\[
  (2p-1)^2-(3p^2-p-1)=(p-1)(p-2)\geq 0
\]
for $p\geq 3$, the desired estimate follows in this case.

It remains to consider the case where $S$ contains no two independent
directions.  Then $S$ is contained in a single one-dimensional subspace of
$\mathbb F_p^2$, so
\[
  |S|\leq p-1.
\]
By Lemma~\ref{lem:cycle-curl-two-point}, every nonzero direction contributes at
least $2$, and every nonzero direction outside $S$ contributes at least $3$.
Together with $A_0(f)=\{0\}$, this gives
\[
\begin{aligned}
  |T_f-T_f|
  &=1+\sum_{u\neq 0}|A_u(f)| \\
  &\geq 1+2|S|+3\bigl(p^2-1-|S|\bigr) \\
  &=3p^2-2-|S| \\
  &\geq 3p^2-p-1.
\end{aligned}
\]
This proves the theorem.
\end{proof}

The remaining gap to the conjectural box value is \((p-1)(p-2)\);
Section~\ref{sec:asymptotic-evidence} gives asymptotic evidence for
Conjecture~\ref{conj:odd-square-plane-box}.

\subsection{Small-prime square-plane certificates}
\label{subsec:small-prime-certificates}

We verified the conjecture for \(p=3\) and \(p=5\) by deterministic Python
computations using the graph parametrisation and fibre decomposition above.
For \(p=5\) the search also uses the cycle and curl constraints from this
section and the action of \(\operatorname{GL}_2(\mathbb F_5)\).  Since \(T-T\)
is negation-invariant and \(0\) is the only self-inverse element in an odd-order
group, \(|T-T|\) is odd.  Thus it suffices to exclude the largest odd support
below the box value: the computations rule out \(|T_f-T_f|\le 23\) for \(p=3\)
and \(|T_f-T_f|\le 79\) for \(p=5\).  The coordinate boxes attain \(25\) and
\(81\), so
\[
\delta\bigl((\mathbb Z/9\mathbb Z)^2,3(\mathbb Z/9\mathbb Z)^2\bigr)=25,
\qquad
\delta\bigl((\mathbb Z/25\mathbb Z)^2,5(\mathbb Z/25\mathbb Z)^2\bigr)=81.
\]

 The Python scripts used and the results could be consulted on our online companion \url{github.com/georgeturcasubb/transversals}.

We could not complete a verification of the conjecture for $p=7$ due to the high computational complexity. However, in these case we conducted many unsuccessful searches for transversals that could give rise to counter-examples.

%% file: section7-asymptotic-evidence.tex
\section{Asymptotic evidence for the square-plane conjecture}
\label{sec:asymptotic-evidence}

The exact values in Subsection~\ref{subsec:small-prime-certificates} cover the
first two odd primes.  We add two asymptotic pieces of evidence: a uniformly
random lifting satisfies the box lower bound with probability tending to one,
and no fixed integer-polynomial rule produces counterexamples for all
sufficiently large primes.

\subsection{Random liftings}
\label{subsec:random-liftings}

Call a nonzero direction $u=(u_1,u_2)\in\mathbb F_p^2$ \emph{mixed} if
$u_1u_2\neq 0$.  For a function $f:\mathbb F_p^2\to\mathbb F_p^2$, define
\[
  B(f)=\{u\in\mathbb F_p^2:u_1u_2\neq 0\text{ and } |A_u(f)|\leq 3\}.
\]
Thus $B(f)$ records mixed directions whose corrected derivative image is too
small for the box contribution.

\begin{theorem}
\label{thm:random-square-plane-box-bound}
Let $p$ be an odd prime.  Choose $f:\mathbb F_p^2\to\mathbb F_p^2$ uniformly at
random, with the values $f(x)$ independent and uniformly distributed.  Then
\[
  \Pr_f\bigl(|T_f-T_f|<(2p-1)^2\bigr)
  \leq
  p^8\left(\frac{p^2 3^p}{p^{2p}}\right)^p.
\]
In particular,
\[
  \Pr_f\bigl(|T_f-T_f|\geq (2p-1)^2\bigr)\longrightarrow 1
  \qquad (p\to\infty).
\]
The same conclusion holds for uniformly random normalized liftings $f(0)=0$.
\end{theorem}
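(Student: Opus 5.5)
The plan is to reduce the event $|T_f-T_f|<(2p-1)^2$ to the event $B(f)\neq\emptyset$, and then bound $\Pr(u\in B(f))$ for each fixed mixed $u$ by a union bound over small candidate images.

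\emph{Reduction.} By Lemma~\ref{lem:square-plane-fibre-decomposition} and Lemma~\ref{lem:cycle-curl-two-point}, every nonzero direction contributes at least $2$. There are $2(p-1)$ non-mixed nonzero directions (the two coordinate axes) and $(p-1)^2$ mixed ones. If $B(f)=\emptyset$, every mixed direction contributes at least $4$, and
\[
|T_f-T_f|\geq 1+4(p-1)+4(p-1)^2=(2p-1)^2.
\]
Hence
\[
\Pr\bigl(|T_f-T_f|<(2p-1)^2\bigr)\leq\sum_{u\text{ mixed}}\Pr\bigl(|A_u(f)|\leq 3\bigr)\leq p^2\max_u\Pr\bigl(|A_u(f)|\leq 3\bigr).
\]

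\emph{Fixed mixed direction.} Fix a mixed $u$. The plane splits into $p$ disjoint affine $u$-lines, each with $p$ points. If $|A_u(f)|\leq 3$, there is a set $S\subseteq\mathbb F_p^2$ with $|S|=3$ containing every value $d_uf(x)$; there are at most $p^6$ such $S$. Fix $S$ and one $u$-line $\ell=\{x_0+iu\}$. The values $d_uf(x_0+iu)$ are $f(x_0+(i+1)u)-f(x_0+iu)+c_i$, with the carries $c_i$ deterministic.

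Condition on $f(x_0)$. The map $(f(x_0+iu))_{i=1}^{p-1}\mapsto(d_uf(x_0+iu))_{i=0}^{p-2}$ is a triangular bijection. So the first $p-1$ corrected derivatives on $\ell$ are independent and uniform in $\mathbb F_p^2$, and the probability that all of them land in $S$ is at most $(3/p^2)^{p-1}$. The last one is forced by the cycle identity and is simply ignored.

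Distinct $u$-lines involve disjoint sets of values of $f$, so these events are independent across lines. This gives
\[
\Pr\bigl(A_u(f)\subseteq S\bigr)\leq\bigl(3/p^2\bigr)^{(p-1)p}=\Bigl(\frac{3^{p}p^{2}}{3\,p^{2p}}\Bigr)^{p}\leq\Bigl(\frac{p^2 3^p}{p^{2p}}\Bigr)^p.
\]
Multiplying by $p^6$ for the choices of $S$ and by $p^2$ for the choices of $u$ gives the stated bound $p^8(\cdot)^p$.

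\emph{Limit.} The base $p^2 3^p/p^{2p}=p^2(3/p^2)^p$ tends to $0$ superexponentially, so $p^8(\cdot)^p\to0$. For normalized liftings with $f(0)=0$, only one line loses one free value. The triangular argument on that line should be started at $x_0=0$, which is already conditioned, so the same bound holds verbatim.

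The only delicate point is the independence bookkeeping: conditioning on one value per line and using the triangular change of variables. The carries are fixed functions of $x$, so they cause no problem.
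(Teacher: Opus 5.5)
Your proposal is correct and follows the paper's proof essentially step for step. Both arguments reduce to $B(f)\neq\varnothing$ via the count $|T_f-T_f|\geq (2p-1)^2-2|B(f)|$, split the plane into $p$ disjoint $u$-cycles, and bound each cycle by fixing one value and ignoring the closing condition; your triangular change of variables gives exactly the paper's count $p^2 3^{p-1}/p^{2p}$. Both then use independence across cycles and union bounds over at most $p^6$ sets $S$ and fewer than $p^2$ mixed directions. The only cosmetic difference is the normalized case: the paper notes that $f\mapsto f-f(0)$ merely translates $T_f$ by an element of $pG_p$, while you rerun the estimate starting the cycle through $0$ at $x_0=0$, and both are valid.
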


\begin{proof}
Before normalization, uniform functions give the uniform model on transversals
for $G_p/H_p$; subtracting $f(0)$ only translates $T_f$ by an element of \(pG_p\).

Fix $f$.  By Lemma~\ref{lem:square-plane-fibre-decomposition},
\[
  |T_f-T_f|=\sum_{u\in\mathbb F_p^2}|A_u(f)|.
\]
The zero direction contributes $1$, and
Lemma~\ref{lem:cycle-curl-two-point} gives at least $2$ from every nonzero
direction.  There are $2(p-1)$ nonzero axis directions and $(p-1)^2$ mixed
directions; the mixed directions outside $B(f)$ contribute at least $4$, while
those in $B(f)$ still contribute at least $2$.  Hence
\[
\begin{aligned}
  |T_f-T_f|
  &\geq 1+2\cdot 2(p-1)+2|B(f)|+4\bigl((p-1)^2-|B(f)|\bigr) \\
  &= (2p-1)^2-2|B(f)|.
\end{aligned}
\]
Thus \(B(f)=\varnothing\) is sufficient for the box lower bound, and failure
forces \(B(f)\neq\varnothing\).

We bound this last event.  Fix a mixed direction $u$ and a set
$S\subseteq\mathbb F_p^2$ with $|S|\leq 3$.  The translation \(x\mapsto x+u\)
partitions \(\mathbb F_p^2\) into \(p\) disjoint \(p\)-cycles.  On one cycle,
write \(x_i=x_0+iu\) and \(y_i=f(x_i)\).  The condition \(d_u f(x_i)\in S\) is
\[
  y_{i+1}-y_i\in S-c_u(x_i).
\]
After choosing \(y_0\), there are at most \(3\) choices for each edge increment;
ignoring the closing condition only overcounts.  Thus one cycle has probability
at most
\[
  \frac{p^2 3^p}{p^{2p}}.
\]
The $p$ cycles use disjoint values of $f$, hence are independent.  The
probability that this fixed $S$ contains all values of $A_u(f)$ is therefore at
most
$
  \left(\frac{p^2 3^p}{p^{2p}}\right)^p$.
Hence there are at most
$
  \sum_{j=0}^3 \binom{p^2}{j}\leq p^6
$
subsets of $\mathbb F_p^2$ of size at most $3$. It follows that, for the fixed mixed
$u$,
\[
  \Pr_f\bigl(|A_u(f)|\leq 3\bigr)
  \leq
  p^6\left(\frac{p^2 3^p}{p^{2p}}\right)^p.
\]
Finally, there are $(p-1)^2<p^2$ mixed directions.  A union bound gives
\[
  \Pr_f(B(f)\neq\varnothing)
  \leq
  p^8\left(\frac{p^2 3^p}{p^{2p}}\right)^p.
\]
Since failure of the box lower bound implies $B(f)\neq\varnothing$, this proves
the estimate.  Its logarithm is
\(-2p^2\log p+p^2\log 3+O(p\log p)\), which tends to \(-\infty\).
\end{proof}

\subsection{Fixed-polynomial liftings}
\label{subsec:fixed-polynomial-liftings}

The random theorem shows that counterexamples, if they exist, are rare among all
functions.  We next rule out another natural source: liftings defined by one
fixed polynomial rule as the prime varies.

Let $g=(g_1,g_2)\in\mathbb Z[X,Y]^2$ be fixed.  For a prime $p$, let
$g_p:\mathbb F_p^2\to\mathbb F_p^2$ be its reduction modulo $p$, and set
\[
  T_{g,p}=\{[z]+p[g_p(z)]:z\in\mathbb F_p^2\}
  \subseteq (\mathbb Z/p^2\mathbb Z)^2.
\]
The word fixed is essential: for a single prime \(p\), every function
\(\mathbb F_p^2\to\mathbb F_p^2\) has polynomial representatives of degree less
than \(p\) in each variable, for instance by finite-field interpolation.  The theorem excludes only one
integer-polynomial rule independent of \(p\). This fixed-polynomial setting is close to value-set questions for
polynomial maps over finite fields \cite{MullenWanWang2013} and to finite-field polynomial/rational
image-expansion results
\cite{BukhTsimerman2012}.

\begin{theorem}
\label{thm:fixed-polynomial-square-plane}
For every fixed $g=(g_1,g_2)\in\mathbb Z[X,Y]^2$, there is an integer $B_g$
such that, for every prime $p>B_g$,
\[
  |T_{g,p}-T_{g,p}|\geq (2p-1)^2.
\]
\end{theorem}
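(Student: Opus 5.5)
The plan is a dichotomy according to whether the fixed rule $g$ is affine over $\mathbb Z$. Throughout I use the fibre decomposition of Lemma~\ref{lem:square-plane-fibre-decomposition}:
\[
  |T_{g,p}-T_{g,p}|=\sum_{u}|A_u(g_p)|,
  \qquad
  d_ug_p(x)=g_p(x+u)-g_p(x)+c_u(x).
\]
The key observation is that the carry vector $c_u(x)$ is constant on each of at most four ``carry rectangles''. These rectangles are determined coordinatewise by whether $[x_i]+[u_i]\ge p$. For a coordinate with $u_i\ne0$, the condition $[x_i]+[u_i]\ge p$ is equivalent to $[x_i]\ge p-[u_i]$.

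\emph{Affine case.} Suppose $g(z)=Mz+b$ with $M\in\mathbb Z^{2\times2}$ and $b\in\mathbb Z^2$. Then $d_ug_p(x)=Mu+c_u(x)$, so $A_u(g_p)=Mu+\operatorname{im}c_u$. I will check directly that $|\operatorname{im}c_u|$ is $4$ when $u$ is mixed and $2$ when $u$ is a nonzero axis direction. In each coordinate $i$ with $u_i\neq0$, both carry values $0$ and $-1$ occur, and the two coordinates can be chosen independently. Summing over $u$ gives exactly
\[
  1+2\cdot2(p-1)+4(p-1)^2=(2p-1)^2,
\]
so the inequality holds for every odd $p$, with equality. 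In this case $T_{g,p}$ is essentially a sheared box.

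\emph{Nonaffine case.} Suppose some component, say $g_1$, has total degree $D\ge2$. Let $F$ be its top-degree homogeneous part. Take $B_g$ larger than $D$ and than every prime dividing a nonzero coefficient of $F$, so that $F\bmod p$ still has degree $D$. For fixed $u$, the difference $\Delta_ug_1(x)=g_1(x+u)-g_1(x)$ is a polynomial in $x$ of degree at most $D-1$. Its degree-$(D-1)$ part is $u\cdot\nabla F(x)$, which vanishes identically only when $u$ lies on a proper algebraic subset of $\mathbb F_p^2$ cut out by polynomials of degree $D-1$ in $u$. By Schwartz--Zippel, at most $(D-1)p$ directions $u$ are ``bad''.

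For a good $u$, I pick a carry rectangle $R$ whose side lengths are at least about $p/2$; this is possible because in each coordinate one of the two intervals has length at least $\lceil p/2\rceil$ or so. On $R$ the first coordinate of $d_ug_p$ equals $\Delta_ug_1$ plus a constant. I then restrict to a line segment inside $R$ along which $\Delta_ug_1$ is a nonconstant univariate polynomial of degree at most $D-1$. Such a segment exists: a generic axis-parallel line works, after excluding $O(D)$ lines on which the restriction becomes constant. Along this segment the polynomial takes at least $(p/2)/(D-1)$ distinct values, since each value has at most $D-1$ preimages. Hence $|A_u(g_p)|\ge p/(2D)$ for all good $u$.

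Bad nonzero $u$ still contribute at least $2$ by Lemma~\ref{lem:cycle-curl-two-point}. Therefore
\[
  |T_{g,p}-T_{g,p}|\ \ge\ \bigl(p^2-Dp\bigr)\frac{p}{2D}\;=\;\Omega_D(p^3),
\]
which exceeds $(2p-1)^2$ once $p$ is large in terms of $D$. This determines $B_g$.

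The step I expect to be the main obstacle is making the restriction argument uniform: the line must lie inside a single carry rectangle of length proportional to $p$, and $\Delta_ug_1$ must stay nonconstant on it. This must be controlled only in terms of $D$, since the argument must hold for every good $u$ simultaneously. My first attempt will be to fix an axis direction $e$ such that the $e$-leading coefficient of $\Delta_ug_1$, viewed as a polynomial in the remaining coordinate, is not identically zero. That coefficient then vanishes on at most $D$ parallel lines, and any other $e$-line meeting the large rectangle gives the required segment. If both axis directions fail for some $u$, I will absorb those $u$ into the bad set, again bounding them by Schwartz--Zippel.
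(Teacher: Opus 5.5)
Your proof is correct, and it takes a different, more elementary route than the paper. The paper first removes affine terms with the automorphism $U_A$. It then splits on whether $\det\bigl(Jg(z+v)-Jg(z)\bigr)$ vanishes identically. In the generic case it bounds fibres of $z\mapsto g_p(z+v)-g_p(z)$ by B\'ezout, getting $|A_v(g_p)|\gg_g p^2$ for all but $O_g(p)$ directions, hence a total $\gg p^4$. In the degenerate case it needs the singular-subspace lemma and a characteristic-zero classification ($g=Az+G(\ell(z))+c$ or $g=Az+w\varphi(z)+c$), reduced modulo $p$, and only then applies one-variable image estimates much like yours.

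You instead project $A_u(g_p)$ onto one coordinate whose top form $F$ has degree $D\ge 2$. Counting roots of a univariate polynomial along an axis-parallel segment in a large carry cell gives $|A_u(g_p)|\gg_D p$ for all but $O(p)$ directions. Since $\Omega_D(p^3)$ already beats $(2p-1)^2$, the Jacobian dichotomy, B\'ezout and the classification are all unnecessary. You also never need to strip affine terms in the nonaffine case, and $B_g$ is explicit in terms of $D$ and the primes dividing the coefficients of $F$. Your direct carry computation in the affine case replaces the paper's use of $U_A$. What the paper's heavier route buys is extra information: quartic growth for generic rules and a structural description of the degenerate ones.

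Two small corrections:
\begin{enumerate}
  \item The coefficients of $u\cdot\nabla F(x)$ are linear in $u$, not of degree $D-1$. Moreover $\nabla F\not\equiv 0\pmod p$ because $p>D$, by Euler's identity $X\partial_XF+Y\partial_YF=DF$. So the bad directions lie in a one-dimensional subspace, at most $p$ of them.
  \item The obstacle you anticipate does not occur. For good $u$, $P=\Delta_u g_1$ is nonconstant of degree at most $D-1<p$, so it has positive degree $m$ in some variable, say $X$. Its leading coefficient $a_m(Y)$ is a nonzero polynomial of degree at most $D-1-m$, so it is nonzero at some $y_0$ in the side $J$ of the carry cell, since $|J|\ge (p+1)/2>D$. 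Then $P(\cdot,y_0)$ takes at least $|I|/m\ge p/(2(D-1))$ values on the other side $I$. Both axis directions can never fail at once, so no further directions need to be discarded.
\end{enumerate}
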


The proof uses a nonzero-Jacobian image bound, large-rectangle estimates, and a
degenerate-Jacobian classification.
\begin{lemma}
\label{lem:image-bound-nonzero-jacobian}
Fix $E\geq 1$.  Let
\[
  F=(P,Q):\mathbb A^2_{\mathbb F_p}\to\mathbb A^2_{\mathbb F_p}
\]
be a polynomial map with $\deg P,\deg Q\leq E$ and $\det JF\not\equiv 0$.  If
$R\subseteq\mathbb F_p^2$, then
\[
  |F(R)|\geq \frac{|R|-C_Ep}{E^2},
\]
where one may take $C_E=\max\{1,2(E-1)\}$.
\end{lemma}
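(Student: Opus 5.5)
We bound $|F(R)|$ by bounding the fibre sizes of $F$ over $R$. We split $R$ into points where $\det JF$ vanishes and points where it does not, control the first set by a curve-counting estimate, and bound the fibres on the second set by Bézout.

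\emph{The critical set.} Put $J=\det JF$. Then $J$ is a nonzero polynomial of degree at most $2(E-1)$, so its zero set in $\mathbb F_p^2$ has at most $2(E-1)p$ points. This follows from the standard bound that a nonzero bivariate polynomial of degree $e$ vanishes on at most $ep$ points of $\mathbb F_p^2$, proved by slicing along lines. If $E=1$, then $J$ is a nonzero constant and the critical set is empty. In all cases the critical set has at most $C_Ep$ points, hence $R_0=R\setminus\{J=0\}$ satisfies $|R_0|\geq |R|-C_Ep$.

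\emph{Fibres on the regular locus.} For $y\in\mathbb F_p^2$ we must show that $F^{-1}(y)\cap R_0$ has at most $E^2$ points. Every such point is a common zero of $P-y_1$ and $Q-y_2$ at which the Jacobian is nonsingular, so it is an isolated, multiplicity-one point of the intersection.

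\begin{itemize}
\item If $P-y_1$ and $Q-y_2$ have no common factor over $\overline{\mathbb F}_p$, Bézout gives at most $E^2$ common zeros.
\item If they share a nonconstant common factor $h$, then every point of the fibre lying on $\{h=0\}$ is either singular on the intersection or has Jacobian zero. Indeed, at a point $z$ of the common component, both $\nabla P$ and $\nabla Q$ are multiples of $\nabla h(z)$ (or the point is singular), so $J(z)=0$. Such points are excluded from $R_0$.
\item The remaining fibre points lie in the intersection of the cofactors $(P-y_1)/g$ and $(Q-y_2)/g$, where $g$ is the gcd. This intersection is finite, and Bézout bounds it by at most $E^2$ points.
\end{itemize}

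\emph{Conclusion.} Combining the two steps, $|F(R)|\geq |F(R_0)|\geq |R_0|/E^2\geq (|R|-C_Ep)/E^2$.

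The main obstacle is making the common-factor case rigorous. The Jacobian criterion requires that a point on a common component have vanishing Jacobian, and we verify this as follows. Write $P-y_1=gA$ and $Q-y_2=gB$. At a point with $g=0$ we get $\nabla P=A\nabla g$ and $\nabla Q=B\nabla g$. These two gradients are parallel, so $J=0$ at that point.
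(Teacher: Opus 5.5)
Your proof is correct and follows the same route as the paper. You bound the critical set $\{\det JF=0\}$ by the elementary $ep$ zero bound, note that any common component of the two fibre curves lies inside the critical set, and apply B\'ezout to the coprime cofactors to get at most $E^2$ noncritical points in each fibre. Your explicit computation that $\nabla P=A\nabla g$ and $\nabla Q=B\nabla g$ on $\{g=0\}$ justifies the paper's step more cleanly than its ``vanish generically'' argument, since it shows that $\det JF$ vanishes at every point of a common component.
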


\begin{proof}
Let $\Delta=\det JF$.  Its zero set has at most $C_Ep$ points by the elementary
polynomial zero bound over finite fields.
For $a\in\mathbb F_p^2$, the fibre $F^{-1}(a)$ is cut out by two plane curves of
degrees at most $E$.  Discard any common fibre components contained in
\(\{\Delta=0\}\); those points are already excluded below.  Off the critical
curve, the remaining fibre curves have no common component: otherwise their
gradients would be dependent at a smooth point of that component, forcing
$\Delta$ to vanish generically there.  Bezout's theorem for plane curves
\cite{CoxLittleOShea2025IVA} then bounds every noncritical fibre by $E^2$
geometric, hence rational, points.
Thus the points of $R$ outside $\{\Delta=0\}$ map with fibres of size at most
$E^2$, and
\[
  |F(R)|\geq \frac{|R|-C_Ep}{E^2}.
\]
\end{proof}

We also use three elementary estimates on large rectangles.  Let
\(I,J\subseteq\mathbb F_p\) be intervals of consecutive standard residues, each
of size at least \(p/2\), and put \(R=I\times J\).
\begin{enumerate}
  \item A nonzero linear form \(\ell\) has \(|\ell(R)|\geq p/2\).  This is
  immediate if one coefficient vanishes, and otherwise follows from
  Cauchy--Davenport \cite{TaoVu2006}.
  \item If \(P\in\mathbb F_p[S]\) is nonconstant of degree at most \(d\), then
  \(|P(S_0)|\geq |S_0|/d\) for every \(S_0\subseteq\mathbb F_p\).
  \item If \(Q\in\mathbb F_p[X,Y]\) is nonconstant of degree at most \(d\), then
  every nonempty fibre has at most \(dp\) points by the elementary zero bound, so
  \(|Q(R)|\geq |R|/(dp)\geq p/(4d)\).
\end{enumerate}

The second input handles the degenerate Jacobian case, using singular subspaces
of \(2\times 2\) matrices.
\begin{lemma}
\label{lem:singular-subspaces-matrices}
Let $k$ be an infinite field and let $L\subseteq M_2(k)$ be a linear subspace
such that every element of $L$ is singular.  Then either all matrices in $L$ have
a common nonzero kernel vector, or all their images lie in a common line in
$k^2$.
\end{lemma}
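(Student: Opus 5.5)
The plan is to sort singular subspaces by the rank of their nonzero elements and then reduce everything to the matrix pencils spanned by two elements of $L$. If $L=0$ there is nothing to prove, and every nonzero $A\in L$ has rank exactly $1$, so $A=a\,b^{T}$ with $\operatorname{im}A=\langle a\rangle$ and $\ker A=b^{\perp}$. Fix one nonzero $A_0=a_0b_0^{T}\in L$. The key step is a two-element lemma: for any $B\in L$, either $\ker B\supseteq\ker A_0$ or $\operatorname{im}B\subseteq\operatorname{im}A_0$. To prove it, consider $\det(A_0+tB)$, a polynomial in $t$ of degree at most $2$. It vanishes for all $t\in k$ because $A_0+tB\in L$, and since $k$ is infinite it vanishes identically.

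The degree-$0$ coefficient $\det A_0$ and the degree-$2$ coefficient $\det B$ are already zero. The remaining condition is the linear coefficient $\operatorname{tr}(\operatorname{adj}(A_0)B)=0$. For a rank-one $A_0=a_0b_0^{T}$ one has $\operatorname{adj}(A_0)=J b_0 a_0^{T}J^{T}$, up to sign, where $J$ is the rotation $\begin{pmatrix}0&-1\\1&0\end{pmatrix}$. The condition therefore reads
\[
  (Ja_0)^{T}B\,(Jb_0)=0 .
\]
Here $Jb_0$ spans $\ker A_0$ and $(Ja_0)^{T}$ is the functional killing $\operatorname{im}A_0$. If $B=0$ the claim is trivial. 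Otherwise write $B=ab^{T}$, and the condition becomes $\bigl((Ja_0)^{T}a\bigr)\bigl(b^{T}Jb_0\bigr)=0$. So either $a\in\langle a_0\rangle$, meaning $\operatorname{im}B\subseteq\operatorname{im}A_0$, or $b\perp Jb_0$, meaning $B$ kills $\ker A_0$. I would do this computation in coordinates rather than quoting an adjugate identity, since it is only a $2\times2$ check.

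Next I pass from pairs to all of $L$. Put
\[
  L_1=\{B\in L:\ B(\ker A_0)=0\},\qquad L_2=\{B\in L:\ \operatorname{im}B\subseteq\operatorname{im}A_0\}.
\]
Both are linear subspaces of $L$, and by the key step $L=L_1\cup L_2$. A vector space over an infinite field, indeed over any field, is never the union of two proper subspaces. Hence $L=L_1$ or $L=L_2$, which gives a common kernel vector (any nonzero vector of $\ker A_0$) or a common image line $\operatorname{im}A_0$.

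The main obstacle is the key step, namely extracting the correct bilinear condition from the vanishing of the linear coefficient of $\det(A_0+tB)$. The infinitude of $k$ is used only to turn vanishing on all of $k$ into vanishing as a polynomial, since over $\mathbb F_p$ a degree-two polynomial in $t$ could vanish on all of $\mathbb F_p$ only through nonpolynomial coincidences. In the intended application one passes to $\overline{\mathbb F}_p$ first.
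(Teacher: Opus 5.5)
Your proof is correct and is essentially the paper's argument. The paper normalizes the fixed rank-one element to $E=\begin{pmatrix}1&0\\0&0\end{pmatrix}$, reads off $d=0$ from the linear coefficient of $\det(E+tB)$ and $bc=0$ from $\det B=0$, and then globalizes by noting that a combination with both off-diagonal entries nonzero would be invertible; this matches your coordinate-free computation of $(Ja_0)^{T}B(Jb_0)$ together with your two-proper-subspaces argument.

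One side remark in your last paragraph is off. Since $\det A_0=\det B=0$, you already have $\det(A_0+B)=(Ja_0)^{T}B(Jb_0)$, so the lemma holds over every field and infinitude is not needed. Also, the paper applies the lemma in characteristic zero, not over $\overline{\mathbb F}_p$.
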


\begin{proof}
If $L=0$, both alternatives hold.  Otherwise choose a nonzero rank-one matrix
in $L$ and change source and target bases so that it is $
  E=\begin{pmatrix}1&0\\0&0\end{pmatrix}$.
For $
  B=\begin{pmatrix}a&b\\ c&d\end{pmatrix}\in L$,
the matrix $E+tB$ is singular for every $t\in k$.  Since $k$ is infinite,
\[
  \det(E+tB)=td+t^2(ad-bc)
\]
vanishes as a polynomial in $t$.  Hence $d=0$.  Since $B$ is singular, also
$bc=0$.  Linearity forces either $b=0$ for all $B\in L$, or $c=0$ for all
$B\in L$; otherwise a linear combination would have both off-diagonal entries
nonzero and would be nonsingular.  These are exactly the common-kernel and
common-image cases.
\end{proof}

This gives the needed classification for polynomial maps with singular
Jacobian differences.
\begin{lemma}
\label{lem:degenerate-jacobian-classification}
Let $k$ be a field of characteristic zero, and let $g:k^2\to k^2$ be a
polynomial map satisfying
\[
  \det(Jg(w)-Jg(z))=0
  \qquad \text{for all } z,w\in k^2.
\]
Then there are $A\in M_2(k)$ and $c\in k^2$ such that one of the following holds:
\[
  g(z)=Az+G(\ell(z))+c
\]
for a nonzero linear form $\ell:k^2\to k$ and a one-variable polynomial map
$G:k\to k^2$, or
\[
  g(z)=Az+w\varphi(z)+c
\]
for some nonzero $w\in k^2$ and some polynomial $\varphi\in k[X,Y]$.
\end{lemma}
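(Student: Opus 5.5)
The approach is to apply Lemma~\ref{lem:singular-subspaces-matrices} to the linear span of the Jacobian differences, and then integrate the resulting pointwise constraint. Let $L\subseteq M_2(k)$ be the $k$-linear span of $\{Jg(w)-Jg(z): z,w\in k^2\}$.

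The first step is to show that every element of $L$ is singular. The hypothesis only gives this for single differences, not for their linear combinations. To bridge the gap, we work over the algebraic closure $\bar k$ and use that $Jg$ is polynomial. For $z_1,\dots,z_r,w_1,\dots,w_r$ and scalars $t_i$, the determinant $\det\bigl(\sum_i t_i(Jg(w_i)-Jg(z_i))\bigr)$ is a polynomial expression. I expect the hypothesis to imply that the image $Jg(k^2)-Jg(z_0)$ lies in the singular cone $\{\det=0\}$. Since that cone is a quadric, it follows that the Zariski closure $V$ of the image of $z\mapsto Jg(z)-Jg(z_0)$ is an irreducible variety through $0$ contained in it. What has to be established is that its linear span consists of singular matrices, not merely the variety itself.

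I would do this via a case split on irreducible subvarieties of the cone $\{ad-bc=0\}$ in $M_2$. The cone is $\mathbb P^1\times\mathbb P^1$ projectively, ruled by two families of planes $\{v\otimes \cdot\}$ and $\{\cdot\otimes \xi\}$. A curve or surface in it whose span contains a nonsingular matrix must, I claim, contradict the condition applied to differences $Jg(w)-Jg(z)$, not just to differences with base point $z_0$. Concretely, $V-V\subseteq$ cone together with $0\in V$ forces $V$ to lie in a single ruling plane, or $V$ to be a line. This is the key step and the main obstacle: I would verify it by parametrizing rank-one matrices as $v\xi^{T}$ and checking that $v_1\xi_1^T-v_2\xi_2^T$ singular forces $v_1\parallel v_2$ or $\xi_1\parallel\xi_2$. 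A connectivity (irreducibility) argument then makes one of these alternatives hold uniformly. The outcome is that all differences share a common kernel vector $n$, or a common image line $kw$, which is exactly the conclusion that Lemma~\ref{lem:singular-subspaces-matrices} would give.

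Finally, integrate, putting $A=Jg(0)$ and $h(z)=g(z)-Az-g(0)$, so that $Jh(z)=Jg(z)-Jg(0)$. In the common-image case every $Jh(z)$ has image in $kw$. Choosing $w'$ with $w'^{T}w=0$, the function $w'^{T}h$ has zero gradient, hence is constant in characteristic zero; it vanishes at $0$, so $h=w\varphi$ with $\varphi$ polynomial. This gives the second form, with $c=g(0)$. In the common-kernel case, $Jh(z)n=0$ says that the directional derivative of $h$ along $n$ is identically zero. In characteristic zero this means $h$ depends only on a linear form $\ell$ with $\ell(n)=0$, $\ell\neq0$, so $h=G(\ell(z))$. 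This gives the first form.
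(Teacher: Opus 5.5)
Your proof is correct, but it gets the common-kernel/common-image dichotomy by a different route from the paper.

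The paper never leaves $M_2(k)$. Write $X_z=Jg(z)-Jg(z_0)$ and let $L$ be the span of these generators. From $\det X_z=\det X_w=\det(X_z-X_w)=0$, the polar form $B(X,Y)=\det(X+Y)-\det X-\det Y$ vanishes on every pair of generators. Hence the quadratic form $\det$ vanishes on all of $L$, and Lemma~\ref{lem:singular-subspaces-matrices} (using that $k$ is infinite) gives the common kernel or image. So the gap you worried about, singular single differences versus their linear combinations, closes in one line by polarization.

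Your identity $\det(v_1\xi_1^{T}-v_2\xi_2^{T})=-\det[v_1,v_2]\,\det[\xi_1,\xi_2]$ is essentially that same polar form evaluated on rank-one matrices. You use it to prove the dichotomy directly rather than citing the singular-subspace lemma, and the irreducibility argument on $V\times V$ does work.

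The passage to $\bar k$, Zariski closures and the rulings of the quadric cone is more machinery than needed, though. Suppose two nonzero generators have non-parallel images. Then they share a row direction $\xi$. Any third nonzero generator whose row direction is not parallel to $\xi$ would have to share an image line with each of them, forcing their images to be parallel, which is a contradiction. So pairwise sharing already gives uniform sharing over $k$ itself, for any field, with no geometry.

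Your integration step matches the paper's (take $z_0=0$ and $A=Jg(0)$). You are more explicit than the paper about where characteristic zero is used: a polynomial with zero gradient, or zero directional derivative, is constant in the corresponding direction.
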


\begin{proof}
Fix $z_0$ and let $L$ be the linear span of
\[
  \{Jg(z)-Jg(z_0):z\in k^2\}.
\]
The determinant quadratic form vanishes on each generator and on each
difference of two generators.  Its polar form therefore vanishes on pairs of
generators, so the determinant vanishes on all of $L$.
Lemma~\ref{lem:singular-subspaces-matrices} gives a common kernel vector or a
common image line.

Write $Jg(z)=A+N(z)$ with $N(z)\in L$.  If $L$ has a common kernel vector
$r\neq 0$, the directional derivative of $g-Az$ along $r$ is zero.  After a
linear change of coordinates, $g-Az$ is independent of one coordinate and has
the form $G(\ell(z))+c$.  If $L$ has common image line $kw$, all derivatives of
$g-Az$ lie in $kw$.  A linear form annihilating $w$ is constant on $g-Az$, so
$g-Az-c$ takes values in $kw$ and $g(z)=Az+w\varphi(z)+c$.
\end{proof}

\begin{proof}[Proof of Theorem~\ref{thm:fixed-polynomial-square-plane}]
We first remove affine terms, which do not change the support size.  Adding a
constant translates \(T_{g,p}\) by an element of \(pG_p\).  Adding a linear term
\(Az\) applies the automorphism
\[
  U_A(t)=t+pA\overline t,
  \qquad t\in(\mathbb Z/p^2\mathbb Z)^2,
\]
where \(\overline t\) denotes reduction modulo \(p\); its inverse is \(U_{-A}\).
Thus both operations preserve \(|T_{g,p}-T_{g,p}|\), and affine-linear liftings
have the coordinate-box support \((2p-1)^2\).  Let \(D=\deg g\).  If \(D\leq 1\),
we are done.

Assume first that $D\geq 2$ and
\[
  \Delta_g(z,v)=\det\bigl(Jg(z+v)-Jg(z)\bigr)
\]
is not the zero polynomial over $\mathbb Q$.  Expand $\Delta_g$ in the
$z$-variables and choose a nonzero coefficient polynomial $H(v)$.  After
excluding finitely many primes depending only on $g$, the reduction of $H$ is
nonzero; hence, for all sufficiently large $p$, all but $O_g(p)$ directions
$v\in\mathbb F_p^2$ have $H(v)\neq 0$.  For these directions, the finite
difference
\[
  F_v(z)=g_p(z+v)-g_p(z)
\]
has degree at most $D-1$ and nonzero Jacobian determinant.

For each good $v$, choose a carry cell \(R_v\): a rectangular cell of standard
residue representatives on which \(c_v(z)\) is constant, with both side lengths
at least \(p/2\), so \(|R_v|\geq p^2/4\).  On \(R_v\), the map
\(z\mapsto d_v g_p(z)\) is a constant translate of \(F_v(z)\).  By
Lemma~\ref{lem:image-bound-nonzero-jacobian}, with $E=D-1$,
\[
  |A_v(g_p)|\geq \frac{p^2/4-C_Ep}{E^2}.
\]
There are \(p^2-O_g(p)\) good directions, each contributing \(\gg_g p^2\), so
Lemma~\ref{lem:square-plane-fibre-decomposition} gives
\[
  |T_{g,p}-T_{g,p}|=\sum_v |A_v(g_p)|\gg_g p^4.
\]
For all sufficiently large primes this exceeds \((2p-1)^2\), proving the
generic case.

It remains to treat the case $\Delta_g\equiv 0$.  Then
\[
  \det(Jg(w)-Jg(z))=0
\]
for all $z,w$ over $\mathbb Q$, so
Lemma~\ref{lem:degenerate-jacobian-classification} applies.  Excluding the
finitely many primes where the rational classification data have denominator
problems, where the nonzero linear form or target vector vanishes after
reduction, where the relevant leading forms vanish, or where the characteristic
is too small for the degree bounds, we remove the affine part as above.  One of
two forms remains.

First suppose \(g(z)=G(\ell(z))\), with \(\ell\neq 0\).  If
\(m=\deg G\leq 1\), the map is affine-linear, already handled.  Otherwise, for
all sufficiently large primes and every \(v\) with \(\ell(v)\neq 0\), the
difference \(G(S+\ell(v))-G(S)\) has a nonconstant coordinate of degree
\(m-1\).  On a large carry cell, the linear-form and univariate image
estimates above give
\[
  |A_v(g_p)|\geq \frac{p}{2(m-1)}.
\]
The \(p^2-p\) such directions contribute \(\gg_g p^3\), which exceeds
\((2p-1)^2\) for all sufficiently large \(p\).

Second suppose \(g(z)=w\varphi(z)\), with \(w\neq 0\).  If
\(m=\deg\varphi\leq 1\), the map is affine-linear, already handled.  Otherwise,
for all sufficiently large primes the highest homogeneous part \(\varphi_m\)
remains nonconstant.  The directions \(v\) for which \(D_v\varphi_m\) vanishes
identically form a proper linear subspace, so all but at most \(p\) directions
give a nonconstant difference \(\varphi(z+v)-\varphi(z)\) of degree at most
\(m-1\).  On a large carry cell, \(d_v g_p\) is a constant translate of
\(w(\varphi(z+v)-\varphi(z))\), so multiplication by \(w\neq 0\) preserves
cardinality and the bivariate image estimate gives
\[
  |A_v(g_p)|\geq \frac{p}{4(m-1)}.
\]
Again the total contribution is \(\gg_g p^3\), which exceeds \((2p-1)^2\) for
all sufficiently large \(p\).

The generic and degenerate cases give an integer $B_g$ such that the claimed
inequality holds for every prime $p>B_g$.
\end{proof}

\begin{remark}
These results do not settle Conjecture~\ref{conj:odd-square-plane-box}: they
exclude random counterexamples with high probability and fixed algebraic rules
for large \(p\), while the conjecture allows arbitrary \(p\)-dependent liftings.
\end{remark}

%% file: section8-open-problems-and-appendices.tex
\section{Open problems and outlook}
\label{sec:open-problems-outlook}

The cyclic and one-nonsplit-coordinate cases are settled by the fibre lower
bound, singleton reduction, and Kneser estimate.  The remaining obstruction is
the same-prime nonsplit rank-two case: the odd-prime box statement is still
conjectural, and the fixed-prime certificates and asymptotic evidence have only
the limited scope proved above. The fixed-polynomial evidence also resonates with the broader finite-field
polynomial-method tradition, exemplified by Dvir's finite-field Kakeya
breakthrough \cite{Dvir2009}; however, our result is not a
Kakeya theorem and uses only the polynomial image constraints developed above.

\subsection{Square-plane problem}

The central open problem is Conjecture~\ref{conj:odd-square-plane-box}.  In the
notation of Lemma~\ref{lem:square-plane-fibre-decomposition}, it asks whether
\(\sum_u |A_u(f)|\geq (2p-1)^2\) for every \(f\), with equality for the
coordinate box.  Theorem~\ref{thm:unconditional-square-plane-bound} leaves gap
\((p-1)(p-2)\), so any counterexample would have to realize a structured loss
among low corrected-derivative images.

Sharper bounds for low corrected-derivative images may require incidence
technology beyond the cycle-curl identities used here, such as Rudnev's \cite{Rudnev2018}
point-plane incidence theorem and the Stevens--de Zeeuw point-line
incidence bound over arbitrary fields
\cite{StevensDeZeeuw2017}.

\begin{problem}
\label{prob:low-direction-rigidity}
Let $p$ be odd and let $f:\mathbb F_p^2\to\mathbb F_p^2$.  Classify, or sharply
bound, the oriented nonzero directions \(u\) for which \(|A_u(f)|\leq 3\).  In
particular, decide whether any loss from these low images is necessarily
compensated by larger corrected-derivative images in the remaining directions.
\end{problem}

\subsection{Higher-rank square-modulus quotients}

The coordinate box suggests the following higher-rank analogue.

\begin{conjecture}
\label{conj:higher-rank-box}
For every odd prime $p$ and every $r\geq 1$,
\[
  \delta\bigl((\mathbb Z/p^2\mathbb Z)^r,
  p(\mathbb Z/p^2\mathbb Z)^r\bigr)=(2p-1)^r.
\]
\end{conjecture}

For higher-rank quotients, one natural next layer could be iterated
sumset-growth technology in the Plünnecke--Ruzsa tradition
\cites{Plunnecke1970,Ruzsa1989}, together with arbitrary-abelian-group
small-doubling structure \cite{GreenRuzsa2007}.

The cases \(r=1\) and \(r=2\) are respectively the cyclic residual theorem and
Conjecture~\ref{conj:odd-square-plane-box}.  For \(r\geq 3\), the same
graph-and-fibre identity holds, and the rank-two argument suggests looking for a
frame of low directions and product-type lower bounds for mixed directions.

\subsection{Primitive quotients and residual geometries}

The primitive quotient reduction separates singleton directions from the
primitive quotient part, so the remaining classification can be phrased in
primitive terms.

\begin{problem}
\label{prob:primitive-support}
Classify the pairs $H\leq G$ for which every primitive transversal $T$ satisfies
\(|D(T)|\geq 2|G/H|-1\), and identify the primitive quotient geometries that
force a strict improvement.
\end{problem}

Cyclic residual quotients attain the bound \(2|G/H|-1\); residual quotients
containing a \(C_2\times C_2\) plane do not.  The product results control one
nonsplit cyclic coordinate with arbitrary split factors, leaving the next case.

\begin{problem}
\label{prob:multiple-nonsplit-coordinates}
Let $p$ be prime, \(G=\prod_{i=1}^r C_{p^{a_i}}\), and
\(H=\prod_{i=1}^r p^{b_i}C_{p^{a_i}}\), with \(1\leq b_i<a_i\) for at least two
indices.  Determine \(\delta(G,H)\), or give sharp lower bounds in terms of the
nonsplit \(p\)-primary coordinates.
\end{problem}

The square-plane case is the smallest instance of
Problem~\ref{prob:multiple-nonsplit-coordinates}.  Resolving it would clarify
the first residual nonsplit geometry left open by the transversal difference
number.